\newtheorem{theorem}{Theorem}[section]
\newtheorem{lemma}[theorem]{Lemma}
\newtheorem{corollary}[theorem]{Corollary}
\newtheorem{proposition}[theorem]{Proposition}
\theoremstyle{definition}
\newtheorem{example}[theorem]{Example}
\newtheorem{question}[theorem]{Question}
\theoremstyle{remark}
\newtheorem{remark}[theorem]{Remark}
\numberwithin{equation}{section}
\def \RM{\mathbb{R}}
\def\dr{{\partial_r}}
\def\pa{\partial}
\def\n{\nabla}
\def\nn{\bar\nabla}
\def\cd{\cdot}
\def\f{\phi}
\def\x{\times}
\begin{document}

\title[Sasakian metrics with an additional contact structure]{Sasakian metrics with an additional contact structure}
\author{Tedi Dr\u aghici}
\address{Department of Mathematics \& Statistics\\Florida International University\\Miami, FL 33199}
\email{draghici@fiu.edu}
\author{Philippe Rukimbira}
\address{Department of Mathematics \& Statistics\\Florida International University\\Miami, FL 33199}
\email{rukim@fiu.edu}

\begin{abstract} The question of whether a Sasakian metric
can admit an additional compatible ($K$-)contact structure is addressed.
In the complete case if the second structure is also assumed Sasakian, works of Tachibana-Yu and Tanno
show that the manifold must be 3-Sasakian or an odd dimensional sphere with constant curvature.
Some extensions of this result are obtained, mainly in dimensions 3 and 5.
\end{abstract}

\maketitle

\section{Introduction}
This note is motivated by the following question: {\it On a Sasakian manifold $(M^{2n+1}, g, {\eta})$, is it possible to have a
second $g$-compatible contact structure $(g, \widetilde{\eta})$, with $\widetilde{\eta}
\not\equiv \pm {\eta}$ ?}
\vspace{0.2cm}

\noindent The answer is known to be affirmative; there are examples when $(g, \widetilde{\eta})$ is also Sasakian.
Indeed, if $(M^{2n+1}, g) = (S^{2n+1}, g_0)$ is a standard odd dimensional sphere of constant curvature,
it is well known that the space of Sasakian structures which are compatible with $g_0$
and a given orientation can be identified with the Hermitian symmetric space $ SO(2n+2)/U(n+1)$. Also,
if $(M^{4k+3},g)$ is 3-Sasakian, there is an
$S^2$-family of Sasakian structures compatible with $g$ and the given orientation.
Conversely, by results of Tachibana-Yu and Tanno,
if $(M, g)$ is complete and $(g, \widetilde{\eta})$
is assumed to be Sasakian as well, then the above are essentially the {\it only} examples.

\begin{theorem} \label{tyt} \cite{TY, T} Suppose $(M^{2n+1}, g)$ is a complete Riemannian
manifold which admits two compatible Sasakian structures $(g, {\eta}),
(g, \widetilde{\eta})$, with ${\eta} \not\equiv \pm \widetilde{\eta}$. Then

(a) $(M^{2n+1}, g)$ is covered by the round sphere $(S^{2n+1}, g_0)$, or

(b) $n= 2k+1$, $(M^{4k+3}, g)$ is 3-Sasakian and $\eta, \widetilde{\eta}$ belong to the $S^2$-family
of $g$-compatible Sasakian structures.

Moreover, if the angle function of the two Reeb fields $g(\xi, \widetilde{\xi})$ is non-constant, then only case (a) can occur.
\end{theorem}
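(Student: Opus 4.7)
The natural object to examine is the angle function $f := g(\xi, \widetilde\eta^\sharp) = \eta(\widetilde\xi)$, where $\widetilde\xi = \widetilde\eta^\sharp$. A short computation using $\nabla_X \xi = -\phi X$, $\nabla_X \widetilde\xi = -\widetilde\phi X$ and the skew-adjointness of $\phi, \widetilde\phi$ yields
\[ \nabla f = \phi\widetilde\xi + \widetilde\phi\xi. \]
A second differentiation using $(\nabla_X \phi)Y = g(X, Y)\xi - \eta(Y)X$ produces an explicit formula for $\mathrm{Hess}(f)$ in terms of $\eta, \widetilde\eta, f$, and the anticommutator $\phi\widetilde\phi + \widetilde\phi\phi$. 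The argument then splits according to whether $f$ is constant.

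If $f$ is non-constant, the goal is case (a). On the open set $U$ where $\nabla f \neq 0$, one exploits the two Sasakian curvature identities $R(X, Y)\xi = \eta(Y)X - \eta(X)Y$ and $R(X, Y)\widetilde\xi = \widetilde\eta(Y)X - \widetilde\eta(X)Y$, together with the Hessian formula, to force the curvature tensor of $g$ on $U$ to coincide with that of the constant sectional curvature $+1$ model. Since any Sasakian manifold satisfies $K(\xi, X) = 1$ for planes containing $\xi$, the constant must be $+1$; completeness and the classical characterization of complete constant-curvature manifolds then identify the universal cover with $(S^{2n+1}, g_0)$.

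If $f$ is constant, the goal is case (b). Since $\eta \neq \pm\widetilde\eta$ implies $f^2 < 1$, one orthonormalizes by setting
\[ \hat\xi := \frac{\widetilde\xi - f\xi}{\sqrt{1-f^2}}, \]
a unit Killing field orthogonal to $\xi$ (constant-coefficient combinations of Killing fields are Killing). Linearity of $R(X,Y)\,\cdot\,$ in the Reeb field gives $R(X, Y)\hat\xi = \hat\eta(Y)X - \hat\eta(X)Y$, so $(g, \hat\eta)$ is a third compatible Sasakian structure. The candidate third Reeb field is $\xi^* := \phi\hat\xi$, a unit vector orthogonal to both $\xi$ and $\hat\xi$. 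From $\nabla f = 0$ one extracts $\phi\widetilde\xi = -\widetilde\phi\xi$, which identifies $\xi^*$ (up to a constant) with $[\xi, \hat\xi]$, showing $\xi^*$ is Killing. Verifying the Sasakian curvature identity for $\xi^*$ and the quaternionic relations among $(\phi, \hat\phi, \phi^*)$ produces a 3-Sasakian structure with $\eta, \widetilde\eta$ lying on the associated $S^2$-family; the dimension constraint $2n+1 \equiv 3 \pmod 4$ follows.

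The central difficulty is the non-constant case: extracting constant sectional curvature from the combined Hessian and Sasakian curvature identities. Neither Sasakian structure alone pins down the full $R$, and the identities must be combined and manipulated carefully on $U$, then propagated to all of $M$ using completeness and the real-analyticity of Sasakian structures.
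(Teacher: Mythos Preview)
The paper does not contain a proof of this theorem. Theorem~\ref{tyt} is quoted from the literature (Tachibana--Yu \cite{TY} and Tanno \cite{T}) as background motivating the paper's own results; no argument for it appears anywhere in the text. So there is nothing in the paper to compare your proposal against.

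That said, a few remarks on your sketch relative to the original sources. Your case split on whether $f=g(\xi,\widetilde\xi)$ is constant is exactly the organizing principle of Tachibana--Yu and Tanno, and your gradient formula $\nabla f=\phi\widetilde\xi+\widetilde\phi\xi$ is correct (cf.\ Proposition~\ref{laplangle} in the Sasakian case). In the constant-$f$ case your Gram--Schmidt construction of a third Sasakian direction is the standard route to the 3-Sasakian conclusion. In the non-constant case, however, your proposal stops short of an argument: you say one ``exploits the two Sasakian curvature identities \dots\ to force'' constant sectional curvature $+1$, and you then flag this as the ``central difficulty'' without resolving it. The actual mechanism in \cite{TY,T} is not a direct curvature computation on $U$ but rather that the second Reeb field satisfies $\nn^2_{X,Y}\widetilde\xi=-g(X,Y)\widetilde\xi+\widetilde\eta(Y)X$, which feeds into an Obata-type rigidity theorem for complete manifolds admitting a non-trivial solution of such an overdetermined system; this is what produces the sphere without needing to compute $R$ pointwise or to invoke real-analyticity. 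Your appeal to real-analyticity to propagate from $U$ to $M$ is unnecessary and somewhat misplaced: the global input is completeness together with the Obata-type characterization, not analytic continuation.
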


\noindent It is thus natural to consider the following refinement of the original question.

\begin{question} \label{q2} Is it possible on a Sasakian manifold
$(M^{2n+1}, g, {\eta})$ to have a second {\bf non-Sasakian} contact
structure $(g, \widetilde{\eta})$?
\end{question}

One may consider local or global versions of the question. One may also allow for
$\eta$ and $\widetilde{\eta}$ to induce the same or opposite orientations on $M^{2n+1}$.
Certainly, this distinction is irrelevant when $dim(M) = 4k+1$. In this case,
if $\widetilde{\eta}$ is a $g$-compatible contact structure, then
$- \widetilde{\eta}$ is also a $g$-compatible contact structure inducing the opposite orientation; thus,
it is enough to treat
Question \ref{q2} in the case when ${\eta}$ and $\widetilde{\eta}$ induce the same orientation.
When $dim(M) = 4k+3$ the two cases can be quite different.
For instance, Question \ref{q2} has a negative answer for a compact Sasakian
3-manifold $(M^3, g, {\eta})$,  if the second contact structure $\widetilde{\eta}$ induces the same orientation
(see Corollary \ref{compact3d-so}). However, when ${\eta}$ and $\widetilde{\eta}$
induce opposite orientations, $\widetilde{SL}_2$ and its compact quotients provide affirmative examples for
Question \ref{q2}. At this time, we do not know if these are the only such 3-dimensional examples. We prove
that locally they are the only examples with constant scalar curvature.
\begin{theorem} \label{dim3sconst}
Let $(M^3, g)$ be a connected Riemannian 3-manifold with constant scalar curvature.
Assume that the metric $g$ admits a compatible Sasakian structure $(g, {\eta})$ and another
compatible contact structure $(g, \widetilde{\eta})$ with $\widetilde{\eta} \not\equiv \pm {\eta}$.
One of the following holds:

(i) The structure $(g,\widetilde{\eta})$ is also Sasakian and, in this case,
 $(M^3,g)$ is locally isometric to the standard sphere $(S^3, g_0)$.

(ii) The structure $(g, \widetilde{\eta})$ is not Sasakian; in this case ${\eta}, \widetilde{\eta}$ must induce opposite orientations,
 and $(M^3,g)$ must be locally isometric to $(\widetilde{SL}_2, g_0)$ as described in Example \ref{basicexp}.
\end{theorem}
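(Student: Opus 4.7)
The plan is to exploit the fact that in dimension $3$ the Ricci tensor of $g$, though it depends only on $g$, is completely determined by the Sasakian structure $(g,\eta)$ with constant scalar curvature $s$: one has
\[
\mathrm{Ric} \;=\; \frac{s-2}{2}\,g \;+\; \frac{6-s}{2}\,\eta\otimes\eta,
\]
so the eigenvalues of $\mathrm{Ric}$ are $\{(s-2)/2,\,(s-2)/2,\,2\}$ with the Reeb field $\xi$ spanning the $2$-eigenspace.

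For part (i), if $(g,\widetilde{\eta})$ is Sasakian as well, the same formula forces $\widetilde{\xi}$ to lie in the $2$-eigenspace. When $s\neq 6$ that eigenspace is one-dimensional, and connectedness of $M$ gives $\widetilde{\xi}\equiv\pm\xi$, contradicting $\widetilde{\eta}\not\equiv\pm\eta$. Hence $s=6$, $\mathrm{Ric}=2g$, so in dimension $3$ the metric has constant sectional curvature $1$ and $(M^3,g)$ is locally isometric to $(S^3,g_0)$. This is the local analogue of Theorem \ref{tyt}(a) for $n=1$ and does not require completeness.

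For part (ii), I would combine two ingredients: in dimension $3$ every $K$-contact structure is Sasakian, so the Blair tensor $\widetilde{h}=\frac{1}{2}\mathcal{L}_{\widetilde{\xi}}\widetilde{\phi}$ does not vanish; and any contact metric $3$-manifold satisfies $\mathrm{Ric}(\widetilde{\xi},\widetilde{\xi})=2-|\widetilde{h}|^2<2$. Comparing this with the Ricci spectrum above forces $s<6$, and writing $\widetilde{\xi}=a\,\xi+v$ with $v\perp\xi$ and $a^2+|v|^2=1$ yields the pointwise identity
\[
|\widetilde{h}|^2 \;=\; \frac{(1-a^2)(6-s)}{2}\;>\;0,
\]
so $|v|>0$, i.e., the two Reeb fields are everywhere linearly independent.

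Since $s$ is constant, the transverse K\"ahler curvature $K_T=(s+2)/2$ is also constant, and the standard classification of Sasakian $3$-manifolds with constant transverse curvature places $(M^3,g)$ locally in one of three homogeneous model families: a Berger deformation of $S^3$ when $s>-2$, the Heisenberg group when $s=-2$, and $\widetilde{SL}_2$ when $s<-2$. The main obstacle is to rule out the first two models and, in parallel, to establish the opposite-orientation claim. My plan is to work in a $g$-orthonormal frame $\{\xi,e_1,e_2=\phi e_1\}$ adapted to $(g,\eta)$, use $d\eta(X,Y)=2g(X,\phi Y)$ and $\nabla_X\xi=-\phi X$, and expand the compatibility identities $d\widetilde{\eta}(X,\widetilde{\phi}Y)=2\bigl(g(X,Y)-\widetilde{\eta}(X)\widetilde{\eta}(Y)\bigr)$ and $\widetilde{\phi}^2=-I+\widetilde{\eta}\otimes\widetilde{\xi}$ for the ansatz $\widetilde{\xi}=a\,\xi+v$. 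The expected output is that a non-Sasakian compatible $\widetilde{\eta}$ exists only when $K_T<0$ (i.e. $s<-2$) and, in that regime, $\widetilde{\eta}\wedge d\widetilde{\eta}$ comes out as a negative multiple of $\eta\wedge d\eta$ pointwise, which is the opposite-orientation statement; the local identification with $(\widetilde{SL}_2,g_0)$ then follows from the homogeneous classification together with the normalization of Example \ref{basicexp}.
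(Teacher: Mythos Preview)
Your argument for part~(i) is correct and coincides with the paper's Proposition~\ref{local3dtyt}: the Ricci form of a Sasakian $3$-manifold forces $s=6$ once a second Sasakian Reeb direction exists, hence constant sectional curvature~$1$.

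For part~(ii) your route diverges from the paper's and the decisive step is missing. The paper never invokes a classification of constant-transverse-curvature Sasakian $3$-manifolds. Instead it works directly with the angle function $f=g(\xi,\widetilde\xi)$. From your identity $|\widetilde h|^2=(1-f^2)(6-s)/2$ (this is exactly relation~\eqref{sfh}) the paper differentiates along $\widetilde\xi$, using $\widetilde\xi(f)=0$ and the Sasakian form of $\mathrm{Ric}$, to obtain $(1-f^2)\,\widetilde\xi(6-s)=-2(6-s)\,\xi(f)$. Constancy of $s$ then gives $\xi(f)=0$ (or $s=6$), and a short eigenvalue argument for $\widetilde h$ on $\widetilde\phi\,\xi$, combined with the computation of $[\xi,\widetilde\xi]$ and Lemma~\ref{commReebs}, forces $\nabla f\equiv 0$. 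With $f$ constant (Proposition~\ref{key3dprop}), Proposition~\ref{fconst} finishes: in the same-orientation case $\nabla f=0$ already gives $\widetilde h=0$; in the opposite-orientation case $\Delta f=8f$ yields $f\equiv 0$, and the Lie brackets of the orthonormal frame $\{\xi,\widetilde\xi,\widetilde\phi\,\xi\}$ are computed to be precisely the $\widetilde{SL}_2$ structure constants of Example~\ref{basicexp}.

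Your alternative via homogeneous models has a real gap, not just an unfinished computation. Passing to a local model identifies $(M,g,\eta)$ with a left-invariant Sasakian structure, but it does \emph{not} make $\widetilde\eta$ left-invariant: a priori the components $a$ and $v$ in $\widetilde\xi=a\,\xi+v$ vary from point to point, so ``expanding the compatibility identities in a frame'' is no easier on a Berger sphere or the Heisenberg group than on the original $M$. You would still need to prove that $a$ is constant, which is exactly the content of Proposition~\ref{key3dprop}. Likewise, within the $\widetilde{SL}_2$ family you must single out the specific metric $g_0$; the classification alone gives a one-parameter family indexed by $K_T<0$. The paper's direct argument sidesteps all of this by establishing $f\equiv\mathrm{const}$ first.
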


Contrasting with case (ii) of Theorem \ref{dim3sconst}, note that Question \ref{q2} has a negative answer
in all dimensions for complete Einstein manifolds (see Theorem 1.2 in \cite{adm}).
This is a consequence of a theorem of Boyer and Galicki \cite{bg} that any complete $K$-contact Einstein
manifold must be Sasakian and a result of Blair characterizing $K$-contact structures by the Ricci
curvature in the direction of the Reeb field (\cite{Blair}, Theorem 7.1).

\vspace{0.2cm}

In section 4, we treat the special case of Question \ref{q2} when we assume that
$(g, \widetilde{\eta})$ is a $K$-contact structure, i.e. the Reeb vector field $ \widetilde{\xi}$ of
$\widetilde{\eta}$ is a Killing vector field for the metric $g$.
In this case, if the Reeb vector fields do not commute, i.e. $[{\xi}, \widetilde{\xi}] \not\equiv 0$,
Theorem \ref{tyt} easily implies that odd-dimensional spheres and 3-Sasakian manifolds are the only possible complete examples.
Thus, the Sasakian--$K$-contact version of Question \ref{q2} has a negative answer
if the Reeb fields do not commute (see Proposition \ref{SKnotcommuting}).
Even more generally, Belgun, Moroianu, Semmelmann \cite{BMS} obtained a beautiful characterization
of $K$-contact manifolds
$(M^{2n+1}, g, \xi_1)$ which admit another unit Killing field $ \xi_2 $, orthogonal to $\xi_1$, and with $[{\xi}_1, {\xi}_2] = \xi_3 \not\equiv 0$.
They show that such a manifold must have dimension $4k+3$ and $(M^{4k+3}, g, \xi_1, \xi_2, \xi_3)$
is a so-called {\it weakly $K$-contact 3-structure}. This means that $\xi_1, \xi_2, \xi_3$ are orthogonal,
all induce $K$-contact structures with respect to the given metric $g$, and there is a splitting
$TM = D_+ \oplus D_- \oplus V \; ,$ where $V={\rm Span}(\xi_1, \xi_2, \xi_3)$, so that the corresponding
endomorphisms $\phi_1, \phi_2, \phi_3$ satisfy the quaternionic relations on $D_-$ and the anti-quaternionic relations
on $D_+$.
See \cite{BMS} for more details on weakly $K$-contact 3-structures. It is also shown there how these structures arise
naturally from pseudo-Riemannian 3-Sasakian manifolds.

As it is related to our main question, let us also mention here a result of Kashiwada \cite{K}
that a contact metric 3-structure $(g, \xi_1, \xi_2, \xi_3)$ (definition as above without
$K$-contact assumption and with $D_+$ trivial) is necessarily 3-Sasakian.

\vspace{0.2cm}

For our Sasakian--$K$-contact version of Question \ref{q2}, it thus remains
to consider the case when the Reeb vectors commute. We provide the following partial answer.
\begin{theorem} \label{codim2}
Let $(M^{2n+1}, g, {\eta}, \xi)$ be a compact Sasakian manifold
which admits another $g$-compatible $K$-contact structure $(g, \widetilde{\eta}, \widetilde{\xi})$
with $[\xi, \widetilde{\xi}] = 0$.
Let $f = g(\xi, \widetilde{\xi})$ be the angle function of the two Reeb fields and assume
that one of its critical sub-manifolds $f^{-1}(1)$, or $f^{-1}(-1)$ has codimension 2.
Then $(g, \widetilde{\eta}, \widetilde{\xi})$  is also Sasakian and the manifold is covered by a sphere.
\end{theorem}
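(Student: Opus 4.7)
Proof proposal.

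Without loss of generality, assume the codimension-$2$ critical submanifold is $N := f^{-1}(1)$; the other case will be analogous. The strategy will be to show that the K-contact structure $(g, \widetilde{\eta})$ is in fact Sasakian. Theorem \ref{tyt} will then finish the argument, since the codimension-$2$ hypothesis forces $f$ to be non-constant, ruling out case (b) and leaving the sphere cover of case (a).

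The key object will be the Killing vector field $u := \xi - \widetilde{\xi}$, whose zero set is precisely $N$. Writing $\phi$ and $\widetilde{\phi}$ for the $(1,1)$-tensors associated to the two structures (so that $\nabla \xi = -\phi$ and $\nabla \widetilde{\xi} = -\widetilde{\phi}$), and setting $A := \nabla u = \widetilde{\phi} - \phi$, the Killing identity $(\nabla_X A) Y = -R(u, X) Y$ together with the Sasakian identity $(\nabla_X \phi) Y = g(X, Y)\xi - \eta(Y) X$ will transform the Sasakian defining equation for $\widetilde{\phi}$ into the equivalent curvature identity
\begin{equation*}
R(u, X) Y \;=\; g(X, Y)\, u \,-\, g(u, Y)\, X \qquad \text{for all } X, Y \in TM. \qquad (\ast)
\end{equation*}
The goal will thus become $D \equiv 0$, where $D(X, Y) := R(u, X) Y - g(X, Y) u + g(u, Y) X$.

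First I would carry out a purely tensorial computation using the Sasakian curvature identity $R(\xi, X) Y = g(X, Y)\xi - \eta(Y) X$ for $\xi$ and the universal K-contact identity $R(X, \widetilde{\xi})\widetilde{\xi} = X - \widetilde{\eta}(X)\widetilde{\xi}$ for $\widetilde{\xi}$. This computation will show that $D(X, \xi) = D(\xi, Y) = D(X, \widetilde{\xi}) = D(\widetilde{\xi}, Y) = 0$, and moreover that $g(D(X, Y), \xi) = g(D(X, Y), \widetilde{\xi}) = 0$ for all $X, Y$. In particular the sectional curvatures $K(u, X) = 1$ for all $X \perp u$ hold automatically, without yet using the codimension-$2$ hypothesis. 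Consequently $D$ takes values in $\xi^\perp \cap \widetilde{\xi}^\perp$, and the only remaining identity to verify will be $D(X_0, Y_0) = 0$ for $X_0, Y_0 \in \xi^\perp \cap \widetilde{\xi}^\perp$.

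The hard part will be propagating this last component of $D$ to zero, and it is here that the codimension-$2$ hypothesis is essential. On $N$ one has $u \equiv 0$, so $\nabla A = -R(u, \cdot) = 0$; combined with $\xi|_N = \widetilde{\xi}|_N$ and $\eta|_N = \widetilde{\eta}|_N$, this will give $D|_N \equiv 0$. The flow of $u$ is isometric, and since $u$ vanishes exactly on $N$, in a tubular neighborhood of $N$ it acts as a rotation of the $2$-dimensional normal disk bundle --- codimension exactly $2$ is what makes this a genuine rotation filling the normal directions transversally. Since $[\xi, \widetilde{\xi}] = 0$ and each Reeb flow preserves the other's defining tensors, the $u$-flow preserves both structures and hence the tensor $D$ itself. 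Thus $D$ will be a $u$-invariant, real-analytic tensor vanishing on $N$, and a unique-continuation argument along the $u$-orbits sweeping out the tubular neighborhood of $N$ will force $D \equiv 0$ in that neighborhood, whence $D \equiv 0$ on all of $M$ by analytic continuation. This will show that $(g, \widetilde{\eta})$ is Sasakian, and Theorem \ref{tyt} then delivers the sphere cover.
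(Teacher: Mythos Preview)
Your reformulation is elegant and the ``easy part'' is correct: the equivalence between the Sasakian condition for $\widetilde{\phi}$ and the curvature identity $(\ast)$ is valid, and the verifications that $D$ vanishes whenever one argument lies in $\mathrm{Span}(\xi,\widetilde{\xi})$ and that $D$ takes values in $\{\xi,\widetilde{\xi}\}^\perp$ all go through using only the Sasakian identity $R(\xi,X)Y=g(X,Y)\xi-\eta(Y)X$ and the $K$-contact identity $R(X,\widetilde{\xi})\widetilde{\xi}=X-\widetilde{\eta}(X)\widetilde{\xi}$.

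The gap is in the ``hard part''. The $u$-flow near $N=f^{-1}(1)$ consists of circles in the normal $2$-planes; these orbits do \emph{not} touch $N$ except in the limit as the radius shrinks to zero. Hence ``$D$ is $u$-invariant and $D|_N=0$'' says nothing about $D$ on a nearby orbit: invariance only forces $D$ to be constant along each circle, not that this constant is zero. Your subsequent appeal to real-analyticity does not help either, since a real-analytic tensor vanishing on a codimension-$2$ submanifold need not vanish identically (one would need vanishing to infinite order, or on an open set, neither of which has been established). So the propagation step, precisely where the codimension-$2$ hypothesis must be exploited, does not go through as written.

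The paper's argument propagates in a different direction: along the (geodesic) gradient flow of $f$, which genuinely connects the critical set $\Sigma_1$ to the rest of $M$. One first shows $[J,\widetilde{J}]=0$ on the cone, so that on the $(2n-2)$-dimensional horizontal distribution $\mathcal{H}=\mathrm{Span}(\xi,\widetilde{\xi},\phi\widetilde{\xi})^\perp$ the product $\phi\widetilde{\phi}$ is an involution with eigenvalues in $\{\pm 1\}$; these multiplicities are therefore locally constant. One then checks that $\mathcal{H}$ is parallel along the gradient flow, so the parallel transport of $\mathcal{H}$ into a point $q\in\Sigma_1$ lands in $T_q\Sigma_1$ (this is exactly where codimension $2$ is used: the normal space is only $2$-dimensional, spanned by the limiting $N$ and $\phi N$). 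Since $\widetilde{\phi}=\phi$ on $T\Sigma_1$, all eigenvalues of $\phi\widetilde{\phi}|_{\mathcal{H}}$ equal $-1$ there, hence everywhere, giving $\widetilde{\phi}=\phi$ on $\mathcal{H}$ globally. A separate computation on the cone then shows this forces $(g,\widetilde{\eta})$ Sasakian. If you want to salvage your approach, the missing ingredient is precisely a transport mechanism \emph{from} $N$ outward (the gradient flow of $f$), not \emph{around} $N$ (the $u$-flow).
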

\noindent In dimension 5, we have the following complete answer for the Sasakian--$K$-contact version of Question \ref{q2}.
\begin{theorem} \label{dim5} Suppose $(M^5, g, {\eta},\xi )$ is a complete Sasakian 5-manifold
which admits another $g$-compatible $K$-contact structure $(g, \widetilde{\eta}, \widetilde{\xi})$.
Then $(g, \widetilde{\eta})$ is also Sasakian and if $\xi \not\equiv \pm \widetilde{\xi}$, then
$(M^5, g)$ is covered by the round sphere $(S^5, g_0)$.
\end{theorem}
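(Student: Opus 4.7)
The plan is to reduce to the existing results — Theorem \ref{tyt} and Theorem \ref{codim2} — by splitting according to whether the two Reeb fields commute. The case $\xi \equiv \pm \widetilde{\xi}$ is immediate, so from now on I assume $\xi \not\equiv \pm \widetilde{\xi}$.

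If $[\xi, \widetilde{\xi}] \not\equiv 0$, then Proposition \ref{SKnotcommuting} (already established in the paper) forces $(g, \widetilde{\eta})$ to be Sasakian, and Theorem \ref{tyt} applies. Since $\dim M = 5 = 2n+1$ gives $n = 2$, which is even, case (b) of Theorem \ref{tyt} (requiring $n = 2k+1$ odd) is excluded; only case (a) survives, giving that $M$ is covered by the round sphere $(S^5, g_0)$.

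In the commuting case $[\xi, \widetilde{\xi}] \equiv 0$, the plan is to verify the hypotheses of Theorem \ref{codim2}. Using $\nabla_V \xi = -\phi V$ and $\nabla_V \widetilde{\xi} = -\widetilde{\phi} V$, a short computation yields $\nabla f = \phi \widetilde{\xi} + \widetilde{\phi} \xi$ and $\xi(f) = \widetilde{\xi}(f) = 0$, so the angle function $f = g(\xi, \widetilde{\xi})$ is invariant under both Reeb flows. The level set $f^{-1}(1)$ coincides with the zero set of the Killing field $\xi - \widetilde{\xi}$, and similarly for $f^{-1}(-1)$; each is totally geodesic of even codimension. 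In dimension $5$, the only possibilities compatible with $\xi \not\equiv \pm \widetilde{\xi}$ are codimension $2$, codimension $4$, or empty.

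The hard part lies entirely in the commuting case. One must (i) ensure $M$ is compact so that Theorem \ref{codim2} is applicable, and (ii) establish that at least one of $f^{-1}(\pm 1)$ has codimension exactly $2$. For (i), the commuting isometric flows of $\xi$ and $\widetilde{\xi}$ generate an $\mathbb{R}^2$-action whose closure in the isometry group of the complete manifold $M$ is a compact torus $T^k$; combining this torus symmetry with the rigidity of Sasakian structures in dimension $5$ should force compactness, possibly after passing to a suitable cover. For (ii), the rank of $\widetilde{\phi} - \phi = \nabla(\widetilde{\xi} - \xi)$ at a zero of $\xi - \widetilde{\xi}$ equals the codimension of $f^{-1}(1)$ there, so a pointwise analysis of the pair of $g$-compatible complex structures $\phi, \widetilde{\phi}$ on the common $4$-dim contact distribution $\ker \eta_p = \ker \widetilde{\eta}_p$ should rule out the codimension-$4$ alternative in the present setting. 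Once (i) and (ii) are in hand, Theorem \ref{codim2} yields that $(g, \widetilde{\eta})$ is Sasakian and that $M$ is covered by a round sphere, completing the proof.
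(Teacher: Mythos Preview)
Your handling of the non-commuting case $[\xi,\widetilde{\xi}]\not\equiv 0$ is correct and matches the paper. The problem is entirely in the commuting case.

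You propose to invoke Theorem \ref{codim2}, but that theorem has two hypotheses you do not have: compactness of $M$, and the codimension-2 condition on a critical submanifold. You acknowledge both as ``the hard part'' and offer only sketches. The compactness sketch (``torus symmetry plus rigidity of Sasakian structures in dimension 5 should force compactness'') is not a valid argument: there is no general principle forcing a complete Sasakian $5$-manifold with a torus of isometries to be compact, and in any case this would be circular since compactness is a \emph{consequence} of the eventual sphere covering. The codimension-2 sketch is likewise incomplete; in dimension $5$ the zero set of $\xi-\widetilde{\xi}$ could a priori have codimension $4$ (a single closed Reeb orbit), and you give no mechanism to exclude this. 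So as written, the commuting case is a genuine gap.

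The paper avoids both issues by not going through Theorem \ref{codim2} at all. Instead it uses Proposition \ref{morethandim5}, whose hypothesis is that $\phi=\widetilde{\phi}$ or $\phi=-\widetilde{\phi}$ on the distribution $\mathcal{H}=\mathrm{Span}(\xi,\widetilde{\xi},\phi\widetilde{\xi})^{\perp}$. In dimension $5$ this is automatic: $\mathcal{H}$ is $2$-dimensional and both $\phi$ and $\widetilde{\phi}$ restrict to almost complex structures on it, so they agree up to sign. Proposition \ref{morethandim5} then gives that $(g,\widetilde{\eta})$ is Sasakian directly, with no compactness needed; Theorem \ref{tyt} (which requires only completeness) finishes the argument. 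This is the missing idea.
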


\noindent For dimensions higher than 5 the Sasakian--$K$-contact problem still remains open
in the case of commuting Reeb fields.

\vspace{0.2cm}

A crucial role throughout this note is played by the angle function $f = g(\xi, \widetilde{\xi})$.
The study of this angle function goes back to the work of Professor Banyaga with the second author \cite{BAR}.
Some ideas from \cite{BAR} and \cite{RUK} are essential in the proof of Theorem \ref{codim2}.
Thus, the authors find it particularly fitting to dedicate this work to Professor Banyaga.



\section{Preliminaries}

A contact form on an orientable odd dimensional manifold $M^{2n+1}$ is a 1-form $\eta$ so that
$\eta \wedge (d\eta)^n$ is a volume form on $M^{2n+1}$, i.e. it does not vanish at any point
of the manifold. It is well known that each contact form $\eta$ has a uniquely associated Reeb field
$\xi$, so that
\begin{equation} \label{reebdef}
 \eta(\xi) = 1 \; , \; \; i_{\xi} d \eta = 0 \; ,
\end{equation}
where $i_X$ denotes the interior product of a form with a tangent vector $X \in TM$. As an immediate consequence
of its definition, the Reeb field satisfies
\begin{equation} \label{Lxieta}
L_{\xi} \eta = 0 \; , \; \; \; L_{\xi} d\eta = 0 \; ,
\end{equation}
where $L_X$ is the Lie derivative with respect to $X \in TM$.
We denote by $\mathcal{D} = {\rm ker} \, \eta$ the contact distribution associated to $\eta$.

A Riemannian metric $g$ is said to be {\it compatible} (or {\it associated}) to $\eta$ if the skew-symmetric endomorphism
$\phi$ of $TM$ defined by
\begin{equation} \label{phidef}
  \frac{1}{2} d \eta (X, Y) = g (X, \phi Y)
\footnote{We generally follow the notations and conventions from \cite{Blair}.
One exception is the definition of the differential $d$. For us the differential of a 1-form $\alpha$ is
$d \alpha(X,Y) := X(\alpha(Y)) - Y(\alpha(X)) - \alpha([X,Y])$. In \cite{Blair}, the right hand-side has a ``1/2''
factor.}
\end{equation}
is an almost complex structure on the contact distribution $\mathcal{D}$. In view of (\ref{reebdef}), this is equivalent to the condition
\begin{equation} \label{phisq}
 \phi^2 = -Id + \eta \otimes \xi \; \; .
\end{equation}
The tuple $(M^{2n+1}, g, \phi, \eta, \xi)$ as above is called a {\it contact metric structure} and, as already done in the introduction,
we sometimes refer to it just as $(M^{2n+1}, g, \eta)$.

A consequence of (\ref{phidef}) and (\ref{phisq}) is that the volume form $dV_g$, defined by the metric $g$ and the orientation
induced by $\eta$, coincides with the volume
form induced by $\eta \wedge (d \eta)^n$, up to a dimension factor:
\begin{equation} \label{volforms}
dV_g = \frac{1}{2^n n !} \eta \wedge (d \eta)^n \; .
\end{equation}

Recall that a contact metric structure $(g, \phi, \eta, \xi)$ is said to be $K$-{\it contact} if the Reeb vector field is a Killing vector field
for the metric, i.e. $L_{\xi} g = 0$. A contact metric structure $(g, \phi, \eta, \xi)$
is called {\it Sasakian} if the covariant derivative of $\phi$ with respect to the Levi-Civita connection is given by
\begin{equation} \label{phiSasaki}
({\nabla_X \phi}) Y =  g(X,Y)\xi - \eta(Y) X \; , \; \; \forall \; X,Y \in TM \; .
\end{equation}
It is well known that the Sasakian condition always implies the $K$-contact condition, with the converse being true
only in dimension $3$.


\vspace{0.2cm}

Let $(M^{2n+1}, g, \phi, \eta, \xi)$ be a contact metric structure. Following \cite{Blair}, define
the tensor field $h$ by
$$ h = \frac{1}{2} L_{\xi} \phi \; \; .$$
It is easy to see that $h$ is $g$-symmetric, $\phi$-anti-invariant and $h \xi = 0$.
Also, the tensor $h$ can be seen as the obstruction to the structure being $K$-contact,
as $h \equiv 0$ if and only if $L_{\xi} g \equiv 0$.


%

In the following proposition we collect some basic facts about contact metric structures.

%
%
%
%
%
%
%
%

\begin{proposition} \label{curvxi}
Let $(M^{2n+1}, g, \phi, \eta, \xi)$ be a contact metric structure.
Denote by $R$, $Ric$, $s$ respectively the Riemannian curvature tensor,
the Ricci tensor and the scalar curvature of the metric $g$. Let $\delta^g$ be the divergence operator induced by the metric
and also denote by $l$ the symmetric endomorphism of $TM$ defined by $lX = R(\xi, X) \xi$. The following relations hold:
\begin{equation} \label{nablaxiphi}
\nabla_{\xi} \phi = 0 \; ;
\end{equation}
\begin{equation} \label{nablaXxi}
\nabla_X \xi = - \phi X - \phi h (X) \; ;
\end{equation}
\begin{equation} \label{deltaeta}
\delta^g \eta = 0 \; ,  \; \; \; \delta^g(d \eta)  = 4n \, \eta
\end{equation}
\begin{equation} \label{nablaxiphih}
(\nabla_{\xi} (\phi h)) X = \phi^2 X + (\phi h)^2 X - l X \; ;
\end{equation}
\begin{equation} \label{ricxi}
Ric(\xi, \cdot) = 2n \, \eta + \delta^g (\phi h) \; ;
\end{equation}
\begin{equation} \label{ricxixi}
Ric(\xi,\xi) = 2n - |h|^2 \; ;
\end{equation}
\begin{equation} \label{xihsq}
\xi(|h|^2) = - 2 <l, \phi h> \; ;
\end{equation}
\begin{equation} \label{deltaricxi}
 - \xi(|h|^2) + \delta^g(Ric(\xi, \cdot)^{\mathcal{D}})
= -\frac{1}{2} \xi(s) + < Ric, \phi h > \; .
\end{equation}
\end{proposition}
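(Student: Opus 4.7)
The proposition collects a chain of standard contact-metric identities, each of which follows from the previous by differentiation and tracing. The plan is to establish them in the order listed, starting from the two key connection formulas (\ref{nablaxiphi})--(\ref{nablaXxi}), since once these are in hand everything else reduces to linear algebra and contractions.

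The first step is to record $\nabla_\xi\xi=0$, which follows by pairing $\nabla_\xi\xi$ against $\xi$ (using $\eta(\xi)\equiv 1$) and against any $X\in\mathcal{D}$ (using $d\eta(\xi,\cdot)=0$). For (\ref{nablaXxi}), I would apply Koszul's formula to $g(\nabla_X\xi,Y)$ and split the result into its $\phi$-commuting and $\phi$-anticommuting components: the $\phi$-anticommuting part equals $-\phi X$ by the defining relation $\frac12 d\eta(X,Y)=g(X,\phi Y)$, while the $\phi$-commuting part is $-\phi h X$ since $h=\tfrac12 L_\xi\phi$ is the $\phi$-anti-invariant symmetric piece of $\nabla\xi$. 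Identity (\ref{nablaxiphi}) then follows by differentiating $\phi^2=-\mathrm{Id}+\eta\otimes\xi$ along $\xi$, using $\nabla_\xi\xi=0$ and $(L_\xi\phi)\xi=0$. Turning to the divergences: (\ref{nablaXxi}) shows $\nabla\xi$ is $g$-skew (both $\phi$ and $\phi h$ are skew), so its trace $-\delta^g\eta$ vanishes. For $\delta^g(d\eta)=4n\eta$, I would compute $\delta^g(d\eta)(X)$ as a trace $-\sum_i (\nabla_{e_i}d\eta)(e_i,X)$ and substitute (\ref{nablaXxi}); the $\phi$-part contributes $4n\eta(X)$ via $\phi^2=-\mathrm{Id}+\eta\otimes\xi$, and the $\phi h$-part contributes zero since $\phi h$ is $\phi$-anti-invariant and $h$ is $g$-symmetric.

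For (\ref{nablaxiphih}), I would differentiate (\ref{nablaXxi}) along $\xi$: applying $\nabla_\xi$ to $\nabla_X\xi=-\phi X-\phi h X$ and rewriting $\nabla_\xi\nabla_X\xi$ as $R(\xi,X)\xi+\nabla_X\nabla_\xi\xi+\nabla_{[\xi,X]}\xi=lX+\nabla_{[\xi,X]}\xi$, then using $\nabla_\xi\phi=0$ to simplify $\nabla_\xi(\phi X)=\phi\nabla_\xi X$, yields the claimed formula after expanding $[\xi,X]=\nabla_\xi X-\nabla_X\xi$. Tracing (\ref{nablaxiphih}) over $X$ gives the expression for $Ric(\xi,\cdot)$ in (\ref{ricxi}): the term $\mathrm{tr}(\phi^2)=-2n$ together with $\mathrm{tr}(\phi h)^2$ terms combine, and integration by parts recognizes the $\xi$-derivative of $\phi h$ as the divergence piece. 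Evaluating (\ref{ricxi}) at $\xi$, and using $h\xi=0$, $\phi h+h\phi=0$, gives (\ref{ricxixi}). Formula (\ref{xihsq}) then comes from $\xi(|h|^2)=2\langle\nabla_\xi h,h\rangle$, substituting $\nabla_\xi h$ from (\ref{nablaxiphih}) (applying $\phi$ to both sides and using $\nabla_\xi\phi=0$), and noting that the $\phi^2+(\phi h)^2$ contributions are $\langle h,\cdot\rangle$-orthogonal, leaving only the $-\langle l,\phi h\rangle$ term doubled.

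The main obstacle is (\ref{deltaricxi}), which requires combining the contracted second Bianchi identity $\delta^g Ric=-\tfrac12 ds$ with the decomposition $Ric(\xi,\cdot)=Ric(\xi,\xi)\eta+Ric(\xi,\cdot)^{\mathcal{D}}$. I would evaluate $(\delta^g Ric)(\xi)=-\tfrac12\xi(s)$ and expand the left side by differentiating the decomposition: the $\eta$-component produces $\xi(Ric(\xi,\xi))=-\xi(|h|^2)$ by (\ref{ricxixi}), plus a cross-term $Ric(\xi,\delta^g\eta\,\xi-\ldots)$ that reorganizes into $\langle Ric,\phi h\rangle$ using (\ref{nablaXxi}), while the $\mathcal{D}$-component gives $\delta^g(Ric(\xi,\cdot)^{\mathcal{D}})$ directly. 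The delicate bookkeeping is to track the signs and the contact-direction versus $\mathcal{D}$-direction split; once the pieces are correctly assembled, the identity reduces to an algebraic rearrangement.
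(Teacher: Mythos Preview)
Your plan matches the paper's proof. The authors defer identities (\ref{nablaxiphi})--(\ref{ricxixi}) to Blair's book, then obtain (\ref{xihsq}) exactly as you do, by pairing (\ref{nablaxiphih}) with $\phi h$ (equivalently with $h$, since $\nabla_\xi\phi=0$ and $|\phi h|^2=|h|^2$) and observing that the $\phi^2$ and $(\phi h)^2$ terms drop out; and they obtain (\ref{deltaricxi}) by computing $\delta^g(Ric(\xi,\cdot))$ in two ways---once from the decomposition $Ric(\xi,\cdot)=(2n-|h|^2)\eta+Ric(\xi,\cdot)^{\mathcal D}$, and once directly as $(\delta^g Ric)(\xi)+\langle Ric,\phi h\rangle$ via (\ref{nablaXxi}) and the contracted Bianchi identity---which is precisely the bookkeeping you describe.

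One small correction to your sketch of the earlier identities: simply tracing (\ref{nablaxiphih}) gives $0=-2n+|h|^2-Ric(\xi,\xi)$, i.e.\ (\ref{ricxixi}), not (\ref{ricxi}); the 1-form identity (\ref{ricxi}) requires a separate argument (for instance the Weitzenb\"ock formula applied to $\eta$, or a direct computation of $\delta^g(\phi h)$ from second derivatives of $\xi$). Since the paper simply cites Blair for this, it does not affect your derivation of the two final relations.
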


\begin{proof} All relations, except perhaps the last two, are well known. For proofs see Blair's book \cite{Blair}.
Relation (\ref{xihsq}) follows by taking the inner product of both sides of (\ref{nablaxiphih}) with $\phi h$,
then use $\phi h \xi = 0$, $tr(\phi h) = 0$ and $<(\phi h)^2, \phi h> = 0$ (as $\phi h$ is $\phi$-anti-invariant and $(\phi h)^2$ is $\phi$-invariant).

Relation (\ref{deltaricxi}) follows from alternative ways of computing $ \delta^g(Ric(\xi, \cdot))$.  Indeed, from (\ref{ricxi}) and (\ref{ricxixi})
we have
$$ Ric(\xi, \cdot) = (2n - |h|^2) \, \eta + (Ric(\xi, \cdot))^{ \mathcal{D}} \; ,$$
and taking divergence of this relation we get
$$ \delta^g(Ric(\xi, \cdot)) = - \xi(|h|^2) + \delta^g(Ric(\xi, \cdot)^{\mathcal{D}}) \; .$$
Computing $ \delta^g(Ric(\xi, \cdot))$ directly by making use of (\ref{nablaXxi}), we have
$$ \delta^g(Ric(\xi, \cdot)) = (\delta^g Ric) (\xi) + <Ric \, , \, \phi h> = - \frac{1}{2} \xi(s) + < Ric, \phi h > \; , $$
where for the last equality we used the Bianchi differential identity $\delta^g Ric = - 1/2 ds$.
\end{proof}

As a consequence of independent interest of the relation (\ref{deltaricxi}), note the following
\begin{corollary}
Let $(M^{2n+1}, g, \phi, \eta, \xi)$ be a contact metric structure. Suppose that the Reeb field is an eigenvector
of the Ricci tensor and that the Ricci tensor is $\phi$-invariant.
Then $\xi(s - 2|h|^2) = 0$.
\end{corollary}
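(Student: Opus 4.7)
The plan is to read the conclusion off directly from identity (\ref{deltaricxi}), using each of the two hypotheses to kill one of its four terms.

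First, the hypothesis that $\xi$ is an eigenvector of $Ric$ is equivalent to $Ric(\xi, X) = Ric(\xi,\xi)\,\eta(X)$ for all $X \in TM$, so the $\mathcal{D}$-component of the 1-form $Ric(\xi, \cdot)$ vanishes identically. In particular $\delta^g(Ric(\xi, \cdot)^{\mathcal{D}}) = 0$, which eliminates the second term on the left-hand side of (\ref{deltaricxi}).

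Second, I would show that $< Ric, \phi h > = 0$ pointwise, using the $\phi$-invariance of $Ric$. The key algebraic input is that $\phi h$ is a symmetric endomorphism of $TM$ which is ``$\phi$-anti-invariant'' on $\mathcal{D}$. Indeed, $h$ is $g$-symmetric with $h\xi = 0$ and $\phi h + h\phi = 0$, from which one checks that $\phi h$ is symmetric, that $(\phi h)\xi = 0$, and that the identity $\phi^2 = -Id + \eta \otimes \xi$ combined with $h\xi = 0$ gives $(\phi h)\phi = -\phi(\phi h)$ on $\mathcal{D}$. The assumed $\phi$-invariance of $Ric$ translates to $Ric \circ \phi = \phi \circ Ric$ on $\mathcal{D}$. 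Consequently $Ric \circ \phi h$ anti-commutes with $\phi$ on the $2n$-dimensional subspace $\mathcal{D}$, and since $\phi|_{\mathcal{D}}$ is invertible (with $\phi^{-1} = -\phi$ there), conjugation by $\phi|_{\mathcal{D}}$ flips the sign of $\mathrm{tr}_{\mathcal{D}}(Ric \circ \phi h)$, forcing this trace to vanish. The $\xi$-direction contributes nothing to $\mathrm{tr}(Ric \circ \phi h)$ because $\phi h$ kills $\xi$, so $< Ric, \phi h > = \mathrm{tr}(Ric \circ \phi h) = 0$.

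Substituting these two vanishings into (\ref{deltaricxi}) collapses it to $-\xi(|h|^2) = -\frac{1}{2}\xi(s)$, which is exactly $\xi(s - 2|h|^2) = 0$. There is no genuine obstacle in this argument; the only step requiring minor care is the sign-check $(\phi h)\phi = -\phi(\phi h)$ on $\mathcal{D}$, and the interpretation of ``$\phi$-invariant Ricci'' as $Ric \circ \phi = \phi \circ Ric$ on $\mathcal{D}$. Both are routine.
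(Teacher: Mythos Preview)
Your proposal is correct and follows exactly the route the paper intends: the corollary is stated as a direct consequence of identity (\ref{deltaricxi}), with the eigenvector hypothesis killing $\delta^g(Ric(\xi,\cdot)^{\mathcal D})$ and the $\phi$-invariance of $Ric$ pairing against the $\phi$-anti-invariance of $\phi h$ to kill $\langle Ric,\phi h\rangle$. One small remark: your endomorphism argument that $Ric\circ\phi h$ anti-commutes with $\phi$ on $\mathcal D$ tacitly uses that $Ric$ preserves $\mathcal D$, which is itself a consequence of the first hypothesis; this is harmless but worth making explicit.
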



Next consider a Riemannian manifold $(M^{2n+1}, g)$ which admits two compatible contact structures
$\eta$, $\widetilde{\eta}$. Let $\xi, \widetilde{\xi}$ be the two Reeb fields and let $\phi$, $\widetilde{\phi}$ be the
almost complex structures on the corresponding contact bundles. In the next proposition we make some first general observations
on the angle function $f := g({\xi},\widetilde{\xi})$ of the two Reeb fields.

\begin{proposition} \label{laplangle}
Suppose $(M^{2n+1}, g)$ is a Riemannian manifold which admits two compatible contact structures $\eta$, $\widetilde{\eta}$
 and let $f = g({\xi},\widetilde{\xi})$ denote the angle function. Then

\vspace{0.2cm}

(a) ${\eta} \wedge (d {\eta})^n = \pm \widetilde{\eta} \wedge (d \widetilde{\eta})^n$ and ${\eta}(\widetilde{\xi}) = \widetilde{\eta}({\xi}) = f$,
where the $\pm$ sign in the first relation corresponds to whether ${\eta}$ and $\widetilde{\eta}$ induce the same, or opposite orientations on $M$.

\vspace{0.2cm}

(b) The gradient of the angle function is given by
\begin{equation} \label{gradfgen}
\nabla f =  {\phi} \widetilde{\xi} + \widetilde{\phi} {\xi} - {\phi} {h} \widetilde{\xi} - \widetilde{\phi} \widetilde{h} {\xi} \; .
\end{equation}

(c) The Laplaceian of the angle function is given by
\begin{equation} \label{lapf}
\Delta f = 8n f - 2<{\phi},\widetilde{\phi}> - 2<{\phi} {h},\widetilde{\phi} \widetilde{h}> - 2Ric({\xi},\widetilde{\xi}) \; .
\end{equation}
In particular, if one of the structures is $K$-contact, then
\begin{equation} \label{lapfK}
\Delta f = 4n f - 2<{\phi},\widetilde{\phi}> \; .
\end{equation}
\end{proposition}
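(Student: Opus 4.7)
For part (a), both identities follow immediately from the compatibility axioms. The volume-form identity is a direct application of (\ref{volforms}) to each of the two contact structures: both $\eta\wedge(d\eta)^n$ and $\widetilde{\eta}\wedge(d\widetilde{\eta})^n$ equal $2^n n!\,dV_g$ up to a sign determined by whether the two forms induce the same orientation. The second identity uses the standard compatibility relation $\eta(X)=g(\xi,X)$, a consequence of $\phi\xi=0$, (\ref{phisq}), and the $g$-skew-symmetry of $\phi$ built into (\ref{phidef}); one then has $\eta(\widetilde{\xi})=g(\xi,\widetilde{\xi})=f$, and analogously $\widetilde{\eta}(\xi)=f$.

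For part (b), I would differentiate $f=g(\xi,\widetilde{\xi})$ directly:
$$df(X)=g(\nabla_X\xi,\widetilde{\xi})+g(\xi,\nabla_X\widetilde{\xi}),$$
substitute (\ref{nablaXxi}) applied to each of the two structures, and move each operator to the $X$-slot by using its symmetry. The relevant facts are that $\phi$ and $\widetilde{\phi}$ are $g$-skew, while $\phi h$ and $\widetilde{\phi}\widetilde{h}$ are $g$-symmetric (the latter because $h$ is symmetric and $h\phi+\phi h=0$, so $(\phi h)^T=h(-\phi)=\phi h$). Reading off $\nabla f$ gives (\ref{gradfgen}).

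For part (c), I would compute the Laplacian by taking the divergence of (\ref{gradfgen}) term by term in a local orthonormal frame $\{e_i\}$. Splitting each of the four divergences $\text{div}(\phi\widetilde{\xi})$, $\text{div}(\widetilde{\phi}\xi)$, $\text{div}(\phi h\widetilde{\xi})$, $\text{div}(\widetilde{\phi}\widetilde{h}\xi)$ by the Leibniz rule into a piece $\sum_i g((\nabla_{e_i}A)Y,e_i)$ and a piece $\sum_i g(A\nabla_{e_i}Y,e_i)$, the first piece for $A=\phi$ is handled by passing to the $2$-form $\Phi=\frac12 d\eta$ and invoking (\ref{deltaeta}), giving a contribution proportional to $f$. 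The first piece for $A=\phi h$ is handled by passing to the symmetric $2$-tensor $\Psi(X,Y)=g(X,\phi h\, Y)$ and invoking (\ref{ricxi}), producing contributions proportional to $f$ and to $Ric(\xi,\widetilde{\xi})$. The second pieces are expanded again via (\ref{nablaXxi}) and combine into the trace contributions $<\phi,\widetilde{\phi}>$ and $<\phi h,\widetilde{\phi}\widetilde{h}>$, since the cross traces $\text{tr}(\phi\widetilde{\phi}\widetilde{h})$ and $\text{tr}((\phi h)\widetilde{\phi})$ vanish on parity grounds (product of a skew and a symmetric endomorphism has zero trace). Summing the four contributions with the signs dictated by (\ref{gradfgen}) yields (\ref{lapf}).

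The specialization to (\ref{lapfK}) is then immediate: if say $h\equiv 0$, the last two terms of (\ref{gradfgen}) vanish, and (\ref{ricxi}) reduces to $Ric(\xi,\cdot)=2n\eta$, hence $Ric(\xi,\widetilde{\xi})=2nf$; substitution into (\ref{lapf}) gives $\Delta f=8nf-2<\phi,\widetilde{\phi}>-4nf=4nf-2<\phi,\widetilde{\phi}>$. The main obstacle in (c) is the careful bookkeeping: tracking the factor $\frac12$ in (\ref{phidef}), correctly identifying $\phi h$ as a symmetric rather than skew endomorphism so that (\ref{ricxi}) is the relevant tool, and verifying that all ``cross'' parity traces cancel.
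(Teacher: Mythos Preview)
Your arguments for (a) and (b) are essentially the paper's: both are declared obvious consequences of (\ref{volforms}) and (\ref{nablaXxi}), and that is what you do.

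For (c) your route is genuinely different. The paper does \emph{not} take the divergence of (\ref{gradfgen}); instead it writes $f=\langle\eta,\widetilde{\eta}\rangle$, applies the product rule for the rough Laplacian,
\[
\Delta f=\langle\nabla^*\nabla\eta,\widetilde{\eta}\rangle+\langle\eta,\nabla^*\nabla\widetilde{\eta}\rangle-2\langle\nabla\eta,\nabla\widetilde{\eta}\rangle,
\]
and then uses the Weitzenb\"ock formula on $1$-forms together with (\ref{deltaeta}) to get $\nabla^*\nabla\eta=4n\,\eta-Ric(\xi,\cdot)$ (and the analogue for $\widetilde{\eta}$). The remaining term $\langle\nabla\eta,\nabla\widetilde{\eta}\rangle=\langle\nabla\xi,\nabla\widetilde{\xi}\rangle$ is expanded via (\ref{nablaXxi}). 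Your approach---dividing each of the four divergences into a ``$\nabla A$'' piece handled by (\ref{deltaeta}) or (\ref{ricxi}) and a ``$\nabla\widetilde{\xi}$'' piece handled by (\ref{nablaXxi}), with the skew/symmetric parity argument killing the cross traces---is a valid alternative and gives exactly the same four contributions $2nf$, hence the $8nf$. The Weitzenb\"ock route is a little cleaner because the Ricci term appears once and for all, whereas you must recognize that the divergence of the symmetric tensor $\phi h$ is what (\ref{ricxi}) computes; on the other hand, your method stays entirely at the level of vector fields and avoids invoking an extra identity. One small slip: in the specialization you say ``the last two terms of (\ref{gradfgen}) vanish'' when $h\equiv0$, but only $-\phi h\,\widetilde{\xi}$ vanishes; $-\widetilde{\phi}\widetilde{h}\,\xi$ need not. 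This is harmless, since your actual derivation of (\ref{lapfK}) proceeds by substitution into (\ref{lapf}) and is correct.
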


\begin{proof} Part (a) is obvious and part (b) follows directly from relation (\ref{nablaXxi}) of Proposition \ref{curvxi}.
For the Laplaceian of $f$, we have
$$ \Delta f = \Delta <\eta, \widetilde{\eta}> =
<\nabla^* \nabla \eta, \widetilde{\eta}> + <\eta, \nabla^* \nabla \widetilde{\eta}> - 2 <\nabla \eta, \nabla \widetilde{\eta}> \; .$$
Now we use the Weitzenb\"ock formula for 1-forms applied to ${\eta}$ and $\widetilde{\eta}$.
$$ (d \delta^g + \delta^g d) \eta = \nabla^* \nabla \eta + Ric(\xi, \cdot) \; . $$
By relation (\ref{deltaeta}), $ (d \delta^g + \delta^g d) \eta = 4n \; \eta $, thus
$$ \nabla^* \nabla \eta = 4n \, \eta - Ric(\xi, \cdot) \; ,$$
and the similar formula for $ \nabla^* \nabla \widetilde{\eta}$.
Now formula (\ref{lapf}) follows noting that
\newline $<\nabla \eta, \nabla \widetilde{\eta}> = <\nabla \xi, \nabla \widetilde{\xi}>$ and using (\ref{nablaXxi}).
Relation (\ref{lapfK}) follows from  formula (\ref{ricxi}) specialized to the $K$-contact case.
\end{proof}

\section{The 3-dimensional case}

In this section we are interested in the 3-dimensional version of Question \ref{q2} and we will prove Theorem \ref{dim3sconst}.
%
We start with the following example of contact forms sharing a compatible metric.
The example is inspired from \cite{GG}, where it appeared in the context of classifying
taut-contact circles. See also \cite{M} for the classification of 3-dimensional Lie groups.

\begin{example} \label{basicexp}
{\rm Let $\mathcal{G}$ denote one of the simply connected Lie groups $S^3=SU(2), \widetilde{SL}_2 , \widetilde{E}_2$, where
$\widetilde{SL}_2$ is the universal cover of $PSL_2\mathbb{R}$ and $\widetilde{E}_2$ is the universal cover of the Euclidean
group (i.e. orientation preserving isometries of $ \mathbb{R}^2$).
Then the Lie algebra
of $\mathcal{G}$ admits a basis $\xi_1, \xi_2, \xi_3$ satisfying the structure equations
$$ [\xi_2, \xi_3] = 2 \xi_1, [\xi_3, \xi_1] = 2 \xi_2, [\xi_1, \xi_2] = 2 \epsilon \xi_3, $$
where $\epsilon = 1, -1$, or $0$ for each of the three groups listed, respectively. Denote by $\eta_1, \eta_2, \eta_3$ the co-frame
dual to $\xi_1, \xi_2, \xi_3$ and let the Riemannian metric
$$g_0 =  \eta_1 \otimes \eta_1 + \eta_2 \otimes \eta_2 + \eta_3 \otimes \eta_3 . $$
In the case of $SU(2)$, $(\eta_1, \eta_2, \eta_3)$ are all contact structures with
the same volume form and are all compatible with the metric $g_0$; in fact,
$(g_0, \eta_1, \eta_2, \eta_3)$ is the standard 3-Sasakian structure
on the 3-dimensional sphere $S^3 = SU(2)$. In the case of $ \widetilde{SL}_2 \mathbb{R}$, or $ \widetilde{E}_2$,
$\eta_1$ and $\eta_2$ are contact structures with the same volume form, both compatible with $g_0$.
In fact, any contact form $\widetilde{\eta} = a_1 \eta_1 + a_2 \eta_2$ of the (taut contact) circle determined by
$\eta_1, \eta_2$ ($a_1^2 + a_2^2 = 1$) is compatible with $g_0$ and is non-Sasakian.
In the case of $ \widetilde{SL}_2 \mathbb{R}$, $(g_0, \eta=\eta_3)$ is a Sasakian structure inducing the opposite orientation
as that of $(g_0, \widetilde{\eta})$.}
\end{example}

We next give a lemma needed in the proof of Theorem \ref{dim3sconst}, but which has some independent
interest.

%
%

\begin{lemma} \label{commReebs} Assume that ${\eta}$, $\widetilde{\eta}$ are two contact forms on (an open set of) a 3-dimensional
manifold, with Reeb fields ${\xi}$ and $\widetilde{\xi}$, respectively. Assume also that at all points
$$ {\eta} \wedge d {\eta} = \widetilde{\eta} \wedge d \widetilde{\eta}, \; \; \; {\eta}(\widetilde{\xi}) = \widetilde{\eta}({\xi}), \; \; \; \mbox{ and} \; \;
[{\xi}, \widetilde{\xi}] = 0 \; .$$
Then, up to sign, the Reeb fields must coincide, i.e. ${\xi} \equiv \pm \widetilde{\xi}$. Moreover, $\widetilde{\eta} = \pm {\eta} + a$, where
$a$ is a closed, basic 1-form (i.e. $d a = 0$, $a(\xi) = 0$).
In particular, such contact forms ${\eta}, \widetilde{\eta}$ admit a common compatible metric if only if ${\eta} \equiv \pm \widetilde{\eta}$, i.e. $a \equiv 0$.
\end{lemma}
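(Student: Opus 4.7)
The plan is to exploit the commutativity of the two Reeb fields to introduce rectifying coordinates, translate the hypotheses into a system of PDEs, and argue that the locus where $\xi$ and $\widetilde{\xi}$ are linearly independent must be empty.

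As a first step, Cartan's formula gives $L_{\xi}\eta = 0$, so
\[
\xi(f) = \xi(\eta(\widetilde{\xi})) = (L_{\xi}\eta)(\widetilde{\xi}) + \eta([\xi,\widetilde{\xi}]) = 0,
\]
and symmetrically $\widetilde{\xi}(f) = 0$; the angle function is constant along both Reeb orbits. On any open set $U$ where $\xi$ and $\widetilde{\xi}$ are linearly independent, simultaneous rectification of the commuting pair furnishes local coordinates $(x,y,z)$ with $\xi = \partial_x$ and $\widetilde{\xi} = \partial_y$, and then necessarily $f = f(z)$. The relations $\eta(\xi)=1$, $\eta(\widetilde{\xi}) = f$ and $i_{\xi}d\eta = 0$ force $\eta = dx + f\,dy + A(y,z)\,dz$, and symmetrically $\widetilde{\eta} = f\,dx + dy + B(x,z)\,dz$. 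A direct computation yields $d\eta = (\partial_y A - f')\,dy\wedge dz$ and $d\widetilde{\eta} = (\partial_x B - f')\,dx\wedge dz$, and the common-volume hypothesis ${\eta}\wedge d{\eta} = \widetilde{\eta}\wedge d\widetilde{\eta}$ reduces to the scalar PDE
\[
\partial_y A + \partial_x B = 2 f'(z).
\]
Since the two summands depend on disjoint sets of variables, separation pins down $A(y,z) = \alpha(z)\,y + a(z)$ and $B(x,z) = \beta(z)\,x + b(z)$ with $\alpha(z) + \beta(z) = 2f'(z)$, subject to the contact non-degeneracy $\alpha \neq f'$ and $\beta \neq f'$.

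The main technical step is to show that these normal forms are in fact incompatible with having $\eta$ and $\widetilde{\eta}$ simultaneously satisfy all the hypotheses on the patch, forcing $U=\emptyset$ and hence $\xi \equiv \pm\widetilde{\xi}$ on a dense open set (and so everywhere by continuity, the sign being locally constant and therefore globally constant on the connected manifold). Granting this, one has $f \equiv \pm 1$; setting $a := \widetilde{\eta} \mp \eta$ with sign matched to $\widetilde{\xi} = \pm\xi$ yields $a(\xi) = f \mp 1 = 0$. Using the identity $d\eta = i_{\xi}(\eta\wedge d\eta)$, valid on any oriented $3$-manifold equipped with the volume form $\eta\wedge d\eta$, together with the analogous identity for $\widetilde{\eta}$ and the common-volume hypothesis, one gets $da = i_{\widetilde{\xi}\mp\xi}(\eta\wedge d\eta) = 0$. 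Finally, if $\eta$ and $\widetilde{\eta}$ share a compatible metric $g$, compatibility forces $\eta = g(\xi,\cdot)$ and $\widetilde{\eta} = g(\widetilde{\xi},\cdot) = \pm g(\xi,\cdot) = \pm\eta$, so $a \equiv 0$.
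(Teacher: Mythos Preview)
Your argument has a genuine gap at precisely the point you label ``the main technical step'': you assert that the local normal forms are ``in fact incompatible with having $\eta$ and $\widetilde{\eta}$ simultaneously satisfy all the hypotheses on the patch,'' but you never prove this --- you explicitly write ``Granting this'' and move on. Since that step is the entire content of the first conclusion of the lemma, the proof is incomplete as it stands.

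Worse, the step you are granting cannot be completed from the hypotheses you have written down. Your own normal form produces explicit examples: take $f\equiv 0$, $\alpha=1$, $\beta=-1$, $a=b=0$, i.e.\ $\eta = dx + y\,dz$ and $\widetilde{\eta} = dy - x\,dz$ on an open set of $\mathbb{R}^3$. Then $\xi=\partial_x$, $\widetilde{\xi}=\partial_y$, $[\xi,\widetilde{\xi}]=0$, $\eta(\widetilde{\xi})=\widetilde{\eta}(\xi)=0$, and $\eta\wedge d\eta=\widetilde{\eta}\wedge d\widetilde{\eta}=dx\wedge dy\wedge dz$; yet $\xi$ and $\widetilde{\xi}$ are everywhere linearly independent. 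So your coordinate computation is correct, but it does not lead to a contradiction; the three displayed hypotheses alone do not force $U=\emptyset$.

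The paper argues coordinate-free: from the volume identity one gets $i_{\widetilde{\xi}}d\eta=-i_{\xi}d\widetilde{\eta}$, and then, using $[\xi,\widetilde{\xi}]=0$, one claims $L_{\widetilde{\xi}}\eta=0$, so that Cartan's formula forces $df=0$ and $i_{\widetilde{\xi}}d\eta=0$, hence $\widetilde{\xi}\parallel\xi$. Note, however, that the example above satisfies all hypotheses while $L_{\widetilde{\xi}}\eta=dz\neq 0$; so whichever route you take, the statement needs an extra ingredient (e.g.\ the existence of a common compatible metric, which is how the lemma is actually invoked later) to exclude such examples. If you intend to repair your argument, you should bring that extra hypothesis into play at the point where you try to derive a contradiction on $U$.
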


\begin{proof}
Applying $ {\eta} \wedge d {\eta} = \widetilde{\eta} \wedge d \widetilde{\eta}$ to $(\xi, \widetilde{\xi}, X)$ and using
${\eta}(\widetilde{\xi}) = \widetilde{\eta}({\xi})$, we get
$$i_{ \widetilde{\xi}} d \eta = - i_{\xi} d \widetilde{\eta} \; .$$
From the assumption $[{\xi}, \widetilde{\xi}] = 0$, we also have $ L_{ \widetilde{\xi}} \eta = L_{\xi} \widetilde{\eta} = 0$
and, using Cartan's formula, it follows that
$$ d({\eta}(\widetilde{\xi})) = 0, \; \; i_{ \widetilde{\xi}} d \eta = 0, \; \;   i_{\xi} d \widetilde{\eta} = 0 \; \; .$$
These relations imply that $\xi = \pm \widetilde{\xi}$. Assume $\xi = \widetilde{\xi}$, the other case being similar.
Let $a:= \widetilde{\eta} - \eta$. Clearly, $a(\xi) = 0$, so $a \wedge d \eta = a \wedge d \widetilde{\eta} = 0$.
From
$$ \widetilde{\eta} \wedge d \widetilde{\eta} = \eta \wedge d \eta + \eta \wedge da + a \wedge d \widetilde{\eta} \; , $$
combined with $ {\eta} \wedge d {\eta} = \widetilde{\eta} \wedge d \widetilde{\eta}$, it follows
that $\eta \wedge da = 0$. But we also have  $i_{\xi} da = 0$, so these imply $da = 0$, i.e. $a$ is a closed, basic 1-form,
as stated. Finally, note that the forms $\widetilde{\eta} = {\eta} + a$ and $\eta$ share a common compatible metric $g$
if and only if $a =0$, as both forms must be the $g$-duals of the common Reeb field $\xi$.
\end{proof}

\begin{remark} {\rm As a consequence of Lemma \ref{commReebs}, observe that the conditions
$$ {\eta} \wedge d {\eta} = \widetilde{\eta} \wedge d \widetilde{\eta}, \; \; \; {\eta}(\widetilde{\xi}) = \widetilde{\eta}({\xi}) \; ,$$
although necessary, are {\bf not} also sufficient for insuring that two contact forms, like-wise oriented,
$\eta, \widetilde{\eta}$ admit a common compatible metric. They are sufficient on the set of points where
$f = {\eta}(\widetilde{\xi}) = \widetilde{\eta}({\xi})$ is not equal to $\pm 1$.}
\end{remark}

Next we specialize Proposition \ref{laplangle} to the 3-dimensional problem.


\begin{proposition} \label{angle3d}
Let $(M^3, g)$ be a 3-dimensional Riemannian manifold which admits a compatible Sasakian structure
$(g, {\phi}, {\xi}, {\eta})$ and another compatible contact structure
$(g, \widetilde{\phi}, \widetilde{\xi}, \widetilde{\eta})$. As before, let $f = g({\xi},\widetilde{\xi})$ denote the angle function between the two Reeb fields.
Then we have the following:

(a) If ${\eta}$ and $\widetilde{\eta}$ induce the same orientation on $M^3$, then $ {\phi} \widetilde{\xi} = - \widetilde{\phi} {\xi}$.
Moreover, in this case the gradient and the Laplaceian of the angle function satisfy
\begin{equation} \label{lapf3d-so}
\nabla f = -\widetilde{\phi} \widetilde{h} {\xi} \; , \; \; \; \; \Delta f = 0 \; .
\end{equation}

(b) If ${\eta}$ and $\widetilde{\eta}$ induce opposite orientations on $M^3$, then $ {\phi} \widetilde{\xi} = \widetilde{\phi} {\xi}$.
Moreover, in this case
\begin{equation} \label{lapf3d-oo}
\nabla f = 2\widetilde{\phi} {\xi} -\widetilde{\phi} \widetilde{h} {\xi} \; , \; \; \; \; \Delta f = 8 f \; .
\end{equation}

Note that in either case $f$ satisfies an elliptic equation, so it has the (strong) unique continuation property.
Also note that in either case we have that $\widetilde{\xi}(f) = 0$.
\end{proposition}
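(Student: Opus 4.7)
The plan is to specialize Proposition~\ref{laplangle} to the case $n=1$, with the additional simplification that the first structure is Sasakian, hence $h \equiv 0$. Formula~(\ref{gradfgen}) then collapses to $\nabla f = \phi\widetilde{\xi} + \widetilde{\phi}\xi - \widetilde{\phi}\widetilde{h}\xi$, and for the Laplacian I substitute $Ric(\xi,\widetilde{\xi}) = 2\eta(\widetilde{\xi}) = 2f$, obtained from~(\ref{ricxi}) applied to the Sasakian structure together with Proposition~\ref{laplangle}(a); this reduces~(\ref{lapf}) to $\Delta f = 4f - 2\langle \phi, \widetilde{\phi}\rangle$. All the remaining content of the proposition then boils down to computing $\phi\widetilde{\xi} + \widetilde{\phi}\xi$ and $\langle \phi, \widetilde{\phi}\rangle$ in each of the two orientation cases.

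The key three-dimensional tool is that $\phi$ acts as a cross product with the Reeb field. In an orthonormal basis $\{\xi, u, v\}$ positively oriented with respect to $\eta$, the volume relation~(\ref{volforms}) forces $d\eta = 2\, u^\flat \wedge v^\flat$, and then~(\ref{phidef}) yields $\phi X = X \times \xi$, where $\times$ is the cross product determined by $g$ and the orientation induced by $\eta$. The same argument applied to $\widetilde{\eta}$ gives $\widetilde{\phi} X = \varepsilon\, X \times \widetilde{\xi}$, with $\varepsilon = +1$ in case~(a) and $\varepsilon = -1$ in case~(b), the sign encoding whether $\widetilde{\eta}$ induces the same orientation as $\eta$ or the opposite one.

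With these identifications $\phi \widetilde{\xi} = \widetilde{\xi} \times \xi$ and $\widetilde{\phi}\xi = \varepsilon\, \xi \times \widetilde{\xi} = -\varepsilon\, \phi\widetilde{\xi}$, so $\phi\widetilde{\xi} + \widetilde{\phi}\xi$ vanishes when $\varepsilon = 1$ and equals $2\widetilde{\phi}\xi$ when $\varepsilon = -1$, yielding the stated gradient formulas. For the Laplacian, the classical three-dimensional identity $(A\times B)\cdot (C\times D) = (A\cdot C)(B\cdot D) - (A\cdot D)(B\cdot C)$ gives, for any orthonormal frame $\{e_i\}$,
$$\sum_i g(e_i \times \xi,\, e_i \times \widetilde{\xi}) = 3f - g(\widetilde{\xi},\xi) = 2f,$$
so $\langle \phi, \widetilde{\phi}\rangle = 2\varepsilon f$ and $\Delta f = 4f(1-\varepsilon)$, which is $0$ in case~(a) and $8f$ in case~(b). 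The final claim $\widetilde{\xi}(f) = 0$ follows at once by pairing each term in the gradient expression with $\widetilde{\xi}$ and using the skew-symmetry of $\widetilde{\phi}$ together with $\widetilde{\phi}\widetilde{\xi} = 0$.

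The main obstacle is nothing more than bookkeeping of sign conventions: one must verify from~(\ref{volforms}) that $\phi$ corresponds to the cross product with the correct sign, and then track $\varepsilon$ carefully through the orientation switch. Once those are fixed the rest is a one-line calculation in each case, and the ellipticity of $\Delta f = 0$ and $\Delta f - 8f = 0$ yields the stated strong unique continuation property as a standard consequence of classical PDE theory.
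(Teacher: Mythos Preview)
Your argument is correct and complete. The route is essentially the one the paper takes, only packaged in vector-calculus language: the paper obtains $\phi\widetilde{\xi}=\mp\widetilde{\phi}\xi$ by evaluating $\eta\wedge d\eta=\pm\widetilde{\eta}\wedge d\widetilde{\eta}$ on $(\xi,\widetilde{\xi},X)$ (as in the proof of Lemma~\ref{commReebs}), and then computes $\langle\phi,\widetilde{\phi}\rangle=\pm 2f$ via the Hodge star identity $\star_g d\eta=2\eta$. Your cross-product identification $\phi X=X\times\xi$ and the Lagrange identity are exactly the vector-field incarnations of these same facts, since in dimension~3 the cross product is the Hodge dual in disguise. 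Both approaches exploit the same low-dimensional coincidence; yours is slightly more hands-on, the paper's slightly more form-theoretic, but neither buys anything the other does not.
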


\begin{proof} As in the proof of Lemma \ref{commReebs}, condition $ {\eta} \wedge d {\eta} = \pm \widetilde{\eta} \wedge d \widetilde{\eta}$
implies
$$i_{ \widetilde{\xi}} d \eta = \mp i_{\xi} d \widetilde{\eta} \; , \; \mbox{ hence }  {\phi} \widetilde{\xi} = \mp \widetilde{\phi} {\xi},$$
where the signs correspond. The formulae for $\nabla f$ now follow from (\ref{gradfgen}). The formulae for $\Delta f$ follow from (\ref{lapfK})
noting that in dimension 3
$$ <\phi, \widetilde{\phi}> = \frac{1}{2} < d \eta,  d \widetilde{\eta}> = \frac{1}{2} < \star_g (d \eta),  \star_g (d \widetilde{\eta})>
= \pm 2 < \eta,  \widetilde{\eta}> = \pm 2f ,$$
where $\star_g$ is the Hodge operator of the metric $g$.
\end{proof}

The proof of Theorem \ref{dim3sconst} follows from Propositions \ref{fconst}, \ref{key3dprop}, \ref{local3dtyt} below.
The first one deals with the case of constant angle function.

\begin{proposition} \label{fconst} Let $(M^3, g)$ be a 3-dimensional Riemannian manifold which admits a compatible Sasakian structure
$(g, {\phi}, {\xi}, {\eta})$ and another compatible contact structure
$(g, \widetilde{\phi}, \widetilde{\xi}, \widetilde{\eta})$. Suppose that the angle function $f = g({\xi},\widetilde{\xi})$ is constant.
Then either $(g, \widetilde{\phi}, \widetilde{\xi}, \widetilde{\eta})$ is Sasakian as well,
or $(M^3,g)$ is locally isometric with $(\widetilde{SL}_2, g_0)$ as presented in Example \ref{basicexp}.
\end{proposition}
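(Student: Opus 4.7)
The proof splits into two cases according to whether $\eta$ and $\widetilde{\eta}$ induce the same or opposite orientations on $M^3$; throughout, I may assume $\widetilde{\eta}\not\equiv\pm\eta$, since otherwise $\widetilde{\eta}=\pm\eta$ forces $(g,\widetilde{\eta})$ to be Sasakian and the conclusion is immediate.

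Same orientation case. By Proposition \ref{angle3d}(a), $\nabla f=-\widetilde{\phi}\widetilde{h}\xi$, so $f$ constant forces $\widetilde{h}\xi\in\ker\widetilde{\phi}={\rm Span}(\widetilde{\xi})$. Symmetry of $\widetilde{h}$ together with $\widetilde{h}\widetilde{\xi}=0$ gives $g(\widetilde{h}\xi,\widetilde{\xi})=g(\xi,\widetilde{h}\widetilde{\xi})=0$, hence $\widetilde{h}\xi=0$. Since $\widetilde{\eta}\not\equiv\pm\eta$ and $f$ is constant, $f^2<1$ everywhere, so $\xi-f\widetilde{\xi}$ is a nowhere-vanishing section of $\widetilde{\mathcal{D}}$ lying in $\ker\widetilde{h}$. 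As $\widetilde{h}|_{\widetilde{\mathcal{D}}}$ is symmetric, $\widetilde{\phi}$-anti-invariant and hence trace-free on a rank-$2$ bundle, a nonzero kernel vector forces $\widetilde{h}\equiv 0$. Thus $(g,\widetilde{\eta})$ is $K$-contact, and since in dimension $3$ every $K$-contact structure is Sasakian, this lands in the first alternative.

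Opposite orientation case. By Proposition \ref{angle3d}(b), $\nabla f=2\widetilde{\phi}\xi-\widetilde{\phi}\widetilde{h}\xi$ and $\Delta f=8f$. Constant $f$ has $\Delta f=0$, so $f\equiv 0$; the vanishing of $\nabla f$ then gives $\widetilde{\phi}(\widetilde{h}\xi-2\xi)=0$, and the inner product with $\widetilde{\xi}$ (using symmetry of $\widetilde{h}$ and $f=0$) yields $\widetilde{h}\xi=2\xi$. Since $f=0$, the triple $\xi,\widetilde{\xi},Z:=\phi\widetilde{\xi}=\widetilde{\phi}\xi$ is a global orthonormal frame on $M^3$, and the $\widetilde{\phi}$-anti-invariance of $\widetilde{h}$ forces $\widetilde{h}Z=-2Z$. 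Using (\ref{nablaXxi}) for both structures, the Sasakian identity (\ref{phiSasaki}) to evaluate $\nabla\phi$, and the standard identities $\phi\xi=0$, $\phi^2=-I+\eta\otimes\xi$ (and their analogues for $\widetilde{\phi}$), a direct computation of all nine $\nabla_{e_i}e_j$ produces the Lie brackets
\[ [\xi,\widetilde{\xi}]=-2Z,\quad [\widetilde{\xi},Z]=2\xi,\quad [Z,\xi]=-2\widetilde{\xi}, \]
which match exactly the $\widetilde{SL}_2$ structure constants ($\epsilon=-1$) of Example \ref{basicexp}. A global orthonormal frame with constant structure constants implies local isometry with the simply connected Lie group carrying the corresponding left-invariant metric, so $(M^3,g)$ is locally isometric to $(\widetilde{SL}_2,g_0)$.

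The main obstacle is the bracket computation in the opposite orientation case: it requires carefully interleaving the Sasakian covariant derivative formula for $\phi$ with the non-$K$-contact formula $\nabla_X\widetilde{\xi}=-\widetilde{\phi}X-\widetilde{\phi}\widetilde{h}X$ applied to the second structure, and using the propagation of $\widetilde{h}\xi=2\xi$ to $\widetilde{h}Z=-2Z$ to bring $\nabla_Z\widetilde{\xi}$ into closed form. The rest is bookkeeping once one observes that $f\equiv 0$ renders $\{\xi,\widetilde{\xi},Z\}$ a well-defined global orthonormal frame.
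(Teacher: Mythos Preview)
Your proof is correct and follows essentially the same route as the paper's: split by orientation, use the gradient and Laplacian formulae of Proposition~\ref{angle3d} to force $\widetilde h=0$ (same orientation) or $f\equiv 0$ with $\widetilde h\xi=2\xi$ (opposite orientation), then compute the Lie brackets of the orthonormal frame $\{\xi,\widetilde\xi,\widetilde\phi\xi\}$ and identify the $\widetilde{SL}_2$ structure constants. Your same-orientation argument is in fact more careful than the paper's, which passes directly from $\widetilde\phi\widetilde h\xi=0$ to $\widetilde\phi\widetilde h=0$; you supply the missing step via trace-freeness of $\widetilde h|_{\widetilde{\mathcal D}}$ on a rank-$2$ bundle. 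In the opposite-orientation case your bracket $[Z,\xi]=-2\widetilde\xi$ differs in sign from the paper's stated $[\widetilde\phi\xi,\xi]=2\widetilde\xi$, but a direct check (e.g.\ via the Killing form, or by matching with Example~\ref{basicexp} under $\xi\leftrightarrow\xi_3$, $\widetilde\xi\leftrightarrow\xi_1$, $Z\leftrightarrow-\xi_2$) confirms your sign and still yields the $\mathfrak{sl}_2$ algebra, so the conclusion is unaffected.
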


\begin{proof} In the case $\eta, \widetilde{\eta}$ induce the same orientation, the gradient of the angle function formula (\ref{lapf3d-so})
shows that $\nabla f = 0$ implies $\widetilde{\phi} \widetilde{h} = 0$, i.e. $(g, \widetilde{\phi}, \widetilde{\xi}, \widetilde{\eta})$ is Sasakian.

 In the case $\eta, \widetilde{\eta}$ induce opposite orientations, the constant angle function $f$ must be identically 0, by
 the Laplaceian relation in (\ref{lapf3d-oo}). Thus the vectors $\xi, \widetilde{\xi}, \widetilde{\phi} \xi$ form a $g$-orthonormal basis.
 From the gradient relation in (\ref{lapf3d-oo}), we get that $\widetilde{\phi} \xi$ is an eigenvector of $ \widetilde{h}$ with eigenvalue $-2$.
 As $ \widetilde{h}$ anti-commutes with $ \widetilde{\phi}$, $ \xi$ is also an eigenvector of $ \widetilde{h}$   with eigenvalue $2$.
Using relation (\ref{nablaXxi}) for $(g, \widetilde{\eta})$ and the Sasakian condition for $(g, \eta)$ it is easy to check
the following Lie brackets relations:
$$ [\xi, \widetilde{\xi}] =  -2 \widetilde{\phi} \xi, \; \;
[  \widetilde{\xi},  \widetilde{\phi}\xi ] =  2 \xi ,
\; \; [ \widetilde{\phi}\xi , \xi ] =  2 \widetilde{\xi} \; .  $$
Thus our manifold is locally isometric with  $(\widetilde{SL}_2, g_0)$ as presented in Example \ref{basicexp}.
\end{proof}

An immediate consequence of Propositions \ref{angle3d} and \ref{fconst} is:

\begin{corollary} \label{compact3d-so} Let $(M^3, g)$ be a compact 3-dimensional Riemannian manifold which admits two compatible contact
structures $(g, {\phi}, {\xi}, {\eta})$, $(g, \widetilde{\phi}, \widetilde{\xi}, \widetilde{\eta})$
inducing the same orientation. If one of the structures is Sasakian then the other is Sasakian as well,
and if $\eta \not\equiv \pm \widetilde{\eta}$, $(M^3, g)$ must be locally isometric with $(S^3, g_0)$.
\end{corollary}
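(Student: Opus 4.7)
The plan is to combine the harmonicity of the angle function in the same-orientation case (Proposition \ref{angle3d}(a)) with compactness, then invoke Proposition \ref{fconst} and finally Theorem \ref{tyt} to reach the sphere.

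First I would assume without loss of generality that $(g,\phi,\xi,\eta)$ is the Sasakian structure, and set $f = g(\xi,\widetilde{\xi})$. Since $\eta$ and $\widetilde{\eta}$ induce the same orientation, Proposition \ref{angle3d}(a) applies and gives $\Delta f = 0$ on $M^3$. On the compact manifold $M^3$, a harmonic function must be constant, so $f \equiv c$ for some constant $c \in [-1,1]$.

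Next I would apply Proposition \ref{fconst}: since $f$ is constant and $\eta$, $\widetilde{\eta}$ induce the \emph{same} orientation, the alternative conclusion involving $(\widetilde{SL}_2, g_0)$ (which requires opposite orientations, as established in Example \ref{basicexp} and used in the proof of Proposition \ref{fconst}) is excluded. Hence $(g, \widetilde{\phi}, \widetilde{\xi}, \widetilde{\eta})$ must itself be Sasakian. This establishes the first assertion.

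For the second assertion, assume further $\eta \not\equiv \pm \widetilde{\eta}$. We now have two compatible Sasakian structures on a compact (hence complete) Riemannian 3-manifold, so Theorem \ref{tyt} applies. Case (a) immediately yields that $(M^3, g)$ is covered by $(S^3, g_0)$, which is a local isometry. Case (b) requires $M^{4k+3}$ to be 3-Sasakian with $n = 2k+1$; in dimension 3 this forces $k=0$, and a 3-dimensional 3-Sasakian manifold is Einstein with positive Einstein constant $2$ and constant sectional curvature $1$, hence also locally isometric to $(S^3, g_0)$. Either way the conclusion follows.

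The only place that requires care is ensuring that the same-orientation hypothesis rules out the $\widetilde{SL}_2$ possibility in Proposition \ref{fconst}; this is immediate from the construction in Example \ref{basicexp}, where the non-Sasakian compatible contact forms on $\widetilde{SL}_2$ live on a taut contact circle and induce the \emph{opposite} orientation to the Sasakian structure. No further computation is needed.
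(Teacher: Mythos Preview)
Your proof is correct and follows essentially the same route as the paper: harmonicity of $f$ from Proposition \ref{angle3d}(a), constancy from compactness, then Proposition \ref{fconst}. The paper's proof stops there with ``the conclusion follows from Proposition \ref{fconst},'' leaving the final $S^3$ step implicit; you make it explicit by invoking Theorem \ref{tyt} and correctly handling its case (b), which is a reasonable elaboration rather than a different approach.
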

\begin{proof} From relation (\ref{lapf3d-so}), the angle function is harmonic, so it must be constant
as the manifold is assumed compact. Then the conclusion follows from Proposition \ref{fconst}.
\end{proof}

We can draw conclusions without compactness and irrespective of orientation if we assume
the constancy of the scalar curvature.
\begin{proposition} \label{key3dprop}
Let $(M^3, g)$ be a 3-dimensional Riemannian manifold which admits a compatible Sasakian structure
$(g, {\phi}, {\xi}, {\eta})$ and another compatible contact structure
$(g, \widetilde{\phi}, \widetilde{\xi}, \widetilde{\eta})$. If the scalar curvature is constant, then
the angle function $f = g({\xi},\widetilde{\xi})$ must be constant, or the second structure is Sasakian as well.
\end{proposition}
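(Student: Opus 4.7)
The plan will be to reduce to a single algebraic identity relating $|\widetilde{h}|^2$ and $f$, and then close the argument via a Bochner computation in which the trace-free Hessian bound plays the crucial role. First I would observe that every Sasakian 3-manifold is automatically $\eta$-Einstein: in dimension 3 the Ricci tensor of a Sasakian structure is $\phi$-invariant, and so its restriction to the 2-dimensional $\phi$-invariant distribution $\mathcal{D}$ is a multiple of $g|_{\mathcal{D}}$. Combined with $Ric(\xi,\xi) = 2n = 2$ from (\ref{ricxixi}) and a scalar-curvature trace computation, this yields
\[
Ric \;=\; \tfrac{s-2}{2}\, g \;+\; \tfrac{6-s}{2}\,\eta\otimes\eta.
\]
Evaluating at $(\widetilde{\xi},\widetilde{\xi})$ and comparing with the version of (\ref{ricxixi}) applied to the second structure will then give the key identity
\[
|\widetilde{h}|^2 \;=\; \tfrac{6-s}{2}\,(1-f^2). \quad (\ast)
\]

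With $s$ constant, $(\ast)$ immediately disposes of the case $s \ge 6$: the right-hand side is non-positive while the left is non-negative, so both vanish. Hence $\widetilde{h}\equiv 0$, so the second structure is $K$-contact and therefore Sasakian in dimension 3; if moreover $s > 6$, then also $f^2 \equiv 1$, so $f$ is constant.

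The substantive case is $s < 6$; set $c = (6-s)/4 > 0$. In the same-orientation case, I would use Proposition \ref{angle3d} and the fact that $\widetilde{h}|_{\widetilde{\mathcal{D}}}$ is a symmetric, trace-free, $\widetilde{\phi}$-anti-invariant endomorphism of a 2-plane (hence has eigenvalues $\pm\sqrt{|\widetilde{h}|^2/2}$) to compute
\[
|\nabla f|^2 \;=\; c\,(1-f^2)^2 \qquad\text{and}\qquad \Delta f = 0.
\]
Since $|\nabla f|^2$ is thereby a function of $f$ alone, the relation $\mathrm{Hess}\,f(\nabla f, \cdot) = \tfrac{1}{2}\nabla|\nabla f|^2$ forces $\nabla f$ to be an eigenvector of $\mathrm{Hess}\,f$ with eigenvalue $-2cf(1-f^2)$. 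The trace-free condition $\Delta f = 0$ on the two transverse eigenvalues $\mu_1,\mu_2$, together with the elementary inequality $\mu_1^2+\mu_2^2 \ge (\mu_1+\mu_2)^2/2$, will then give the pointwise bound
\[
|\mathrm{Hess}\,f|^2 \;\ge\; 6c^2 f^2 (1-f^2)^2.
\]
Feeding this into Bochner's identity and using the $\eta$-Einstein form of $Ric$ to evaluate $Ric(\nabla f,\nabla f)$, I expect to obtain
\[
6c^2 f^2 (1-f^2)^2 \;\le\; 2c(1-f^2)^2 (3cf^2-1) - 2c\,\xi(f)^2,
\]
which after simplification collapses to $(1-f^2)^2 + \xi(f)^2 \le 0$. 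Both terms being non-negative, this forces $f^2 \equiv 1$, so $f$ is constant on the connected $M$.

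The opposite-orientation case will be treated in the same spirit, using $\nabla f = 2\widetilde{\phi}\xi - \widetilde{\phi}\widetilde{h}\xi$ and $\Delta f = 8f$ from Proposition \ref{angle3d}, together with the adapted orthonormal frame $\{\xi,\widetilde{\xi},e\}$ where $e$ spans the line $\mathcal{D}\cap\widetilde{\mathcal{D}}$ on $\{|f|<1\}$. I expect this to be the main obstacle: in this case $|\nabla f|^2$ acquires an extra $f$-independent term $-4g(u,\widetilde{h}u)$ with $u = \xi - f\widetilde{\xi}$, so $|\nabla f|^2$ is not a pure function of $f$ and $\nabla f$ need not be an eigenvector of $\mathrm{Hess}\,f$. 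Closing the argument will require a pointwise analysis of $\widetilde{h}$ in the adapted frame, combined with both the algebraic constraint $(\ast)$ and the second-order information $\Delta f = 8f$, to recover a Bochner-type contradiction analogous to the one above.
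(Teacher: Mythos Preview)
Your derivation of $(\ast)$ matches the paper's relation (\ref{sfh}), and your Bochner argument for the same-orientation case with $s<6$ is both correct and genuinely different from the paper's proof. The key point that $|\widetilde h v|^2=\tfrac12|\widetilde h|^2\,|v|^2$ for every $v\in\widetilde{\mathcal D}$ (trace-free symmetric on a $2$-plane) gives $|\nabla f|^2=c(1-f^2)^2$ exactly; then $\nabla f$ is an eigenvector of $\mathrm{Hess}\,f$, the bound $|\mathrm{Hess}\,f|^2\ge 6c^2f^2(1-f^2)^2$ follows, and Bochner does collapse to $(1-f^2)^2+\xi(f)^2\le 0$ as you claim. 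This second-order route is self-contained and avoids the Lie-bracket computation and Lemma~\ref{commReebs} that the paper relies on.

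The gap is the opposite-orientation case: you have not proved it. You correctly observe that the cross-term $-4g(u,\widetilde h u)$ prevents $|\nabla f|^2$ from being a function of $f$ alone, so $\nabla f$ is no longer forced to be a Hessian eigenvector and your lower bound on $|\mathrm{Hess}\,f|^2$ evaporates. Without it there is no evident Bochner contradiction, and ``I expect a pointwise analysis to close the gap'' is not an argument. The paper sidesteps this entirely with a first-order computation that is insensitive to orientation: differentiating $(\ast)$ along $\widetilde\xi$ and using the $3$-dimensional form $\widetilde\xi(|\widetilde h|^2)=2\langle Ric,\widetilde\phi\widetilde h\rangle$ of (\ref{xihsq}) yields $(1-f^2)\,\widetilde\xi(6-s)=-2(6-s)\,\xi(f)$. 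With $s$ constant this forces $s=6$ or $\xi(f)=0$; in the latter case $\widetilde\phi\xi$ becomes an eigenvector of $\widetilde h$ with eigenvalue $\lambda$, both $\nabla f$ and $[\xi,\widetilde\xi]$ are multiples of $\widetilde\phi\xi$, and the single identity $[\xi,\widetilde\xi](f)=0$ gives a quadratic in $\lambda$ whose roots yield $\nabla f=0$ or $\widetilde h=0$. You should either carry out your promised frame analysis in full or adopt this first-order route for the opposite-orientation half.
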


\begin{proof} We assume $f$ is not identically $1$, or $-1$ as otherwise there is nothing to prove.
As $f$ satisfies the unique continuation property, the set of points where $f^2 \neq 1$ is an open dense
set. We'll work on this set and then extend the conclusions by continuity.

It is well known that in dimension 3, the whole curvature tensor is determined by the Ricci tensor.
Using this, relation (\ref{xihsq}) for the structure $(g, \widetilde{\phi}, \widetilde{\xi}, \widetilde{\eta})$ takes the form
$$ \widetilde{\xi} (|\widetilde{h}|^2) = 2<Ric, \widetilde{\phi} \widetilde{h}> \; .$$
On the other hand, the assumption that $(M^3, g, {\eta})$ is Sasakian implies that
the Ricci tensor is given by
$$ Ric = \Big( \frac{s}{2} - 1 \Big) g + \Big( 3 - \frac{s}{2} \Big) {\eta} \otimes {\eta} \; . $$
From these two relations we get
\begin{equation} \label{key3drel}
\widetilde{\xi} (|\widetilde{h}|^2) = (6 - s) <\widetilde{\phi} \widetilde{h} \xi \;, \; \xi > \; .
\end{equation}
From the above expression of the Ricci tensor we also get
$$Ric(\widetilde{\xi},\widetilde{\xi})  = \Big( \frac{s}{2} - 1 \Big) + \Big( 3 - \frac{s}{2} \Big) f^2 \; .$$
Combining this with relation (\ref{ricxixi}), we obtain
\begin{equation} \label{sfh}
 (1 - f^2) ( 3 - \frac{s}{2} ) = | \widetilde{h} |^2  \; .
\end{equation}
Noting also that $ \widetilde{\xi}(f) = 0$ and $\xi(f) = - <\widetilde{\phi} \widetilde{h} \xi \;, \; \xi >$
(in either case when $\eta,  \widetilde{\eta}$ induce the same or opposite orientation),
we obtain from (\ref{key3drel}) and (\ref{sfh})
\begin{equation} \label{nablasf}
(1-f^2)\widetilde{\xi} (6 -s) = - 2 (6 - s) \xi (f) \; .
\end{equation}

Using the assumption that the scalar curvature is constant and the fact that $ \widetilde{\xi}(f) = 0$,
it follows that both sides of relation (\ref{nablasf}) vanish.
From (\ref{sfh}), $s = 6$ if and only if $ \widetilde{h}= 0$. It remains to consider the case
$ \xi(f) = - <\widetilde{\phi} \widetilde{h} \xi \;, \; \xi > = 0$. Note that this condition is equivalent
with $ \widetilde{\phi} \xi$ being an eigenvector for $ \widetilde{h}$. Let $\lambda$ be the corresponding
eigenvalue. From now on we split the cases when ${\eta}$ and $\widetilde{\eta}$
induce the same or opposite orientation, although the arguments are very similar.

In the case of the same orientation we have
$$ \nabla f = - \widetilde{\phi} \widetilde{h} {\xi} = \lambda  \widetilde{\phi} {\xi} \; ,$$
$$[{\xi}, \widetilde{\xi}] = \nabla_{{\xi}} \widetilde{\xi} -  \nabla_{\widetilde{\xi}} {\xi} =
- 2\widetilde{\phi} {\xi} - \widetilde{\phi} \widetilde{h} {\xi} = (- 2 + \lambda)  \widetilde{\phi} {\xi}.$$
As $ \widetilde{\xi}(f) = \xi(f) = 0$, we have $ [{\xi}, \widetilde{\xi}] (f) = 0$. Thus the two relations above
imply
$$ 0 = [{\xi}, \widetilde{\xi}] (f) = g( [{\xi}, \widetilde{\xi}] , \nabla f ) = \lambda (-2 + \lambda) (1-f^2) .$$
As we work on the (open, dense) set where $f^2 \neq 1$, it follows that $\lambda = 0$ or $\lambda = 2$.
But $\lambda = 2$ corresponds to $ [{\xi}, \widetilde{\xi}] = 0$, a case which cannot occur, by Lemma \ref{commReebs}
(as $\widetilde{\eta} \neq \pm \eta$). Thus $\lambda = 0$, i.e. $\nabla f = 0$ is the only possibility in this case.

In the case of opposite orientation we have
$$ \nabla f = 2\widetilde{\phi} {\xi} - \widetilde{\phi} \widetilde{h} {\xi} = (2 + \lambda)  \widetilde{\phi} {\xi}  \; ,$$
$$[{\xi}, \widetilde{\xi}] = \nabla_{{\xi}} \widetilde{\xi} -  \nabla_{\widetilde{\xi}} {\xi} =
- \widetilde{\phi} \widetilde{h} {\xi} = \lambda  \widetilde{\phi} {\xi} \; .$$
As before,
$$ 0 = [{\xi}, \widetilde{\xi}] (f) = g( [{\xi}, \widetilde{\xi}] , \nabla f ) = \lambda (2 + \lambda) (1-f^2) ,$$
so $\lambda = 0$ or $\lambda = -2$.
In this case, $\lambda = 0$, corresponds to $ [{\xi}, \widetilde{\xi}] = 0$, but also implies that $\widetilde{h} = 0$,
that is the structure $(g, \widetilde{\eta})$ is Sasakian. The case $\lambda = -2$ corresponds to $\nabla f = 0$.
\end{proof}

The final step of the proof of Theorem \ref{dim3sconst} is the elementary
observation that in dimension 3, Theorem \ref{tyt} has the following local version.
\begin{proposition} \label{local3dtyt}
Suppose $(M^3, g)$ is a connected Riemannian
manifold which admits two compatible Sasakian structures $(g, {\eta}),
(g, \widetilde{\eta})$, with ${\eta} \not\equiv \pm \widetilde{\eta}$.
Then $(M^3, g)$ is locally isometric to the round sphere $(S^{3}, g_0)$.
\end{proposition}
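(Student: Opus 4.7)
The plan is to exploit the rigid form of the Ricci tensor of a Sasakian 3-manifold already recorded in the proof of Proposition \ref{key3dprop}, namely
$$Ric = \Big(\frac{s}{2} - 1\Big) g + \Big(3 - \frac{s}{2}\Big)\, \eta \otimes \eta \; .$$
Writing the same formula for the second Sasakian structure $(g, \widetilde{\eta})$ and subtracting yields the pointwise identity
$$\Big(3 - \frac{s}{2}\Big)\big(\eta \otimes \eta - \widetilde{\eta} \otimes \widetilde{\eta}\big) \equiv 0 \; .$$
My goal is to deduce $s \equiv 6$ from this identity: once that is in hand, the Ricci formula collapses to $Ric = 2g$, so $(M^3, g)$ is Einstein, and an Einstein 3-manifold has constant sectional curvature -- here equal to $s/6 = 1$ -- which gives that $(M^3,g)$ is locally isometric to $(S^3, g_0)$.

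For $s \equiv 6$, I first argue pointwise. At any point $p$ with $|f(p)| < 1$, the Reeb fields $\xi_p, \widetilde{\xi}_p$ are linearly independent, hence so are the $g$-dual 1-forms $\eta_p, \widetilde{\eta}_p$, and choosing $X \in T_p M$ with $\eta(X) = 0$ but $\widetilde{\eta}(X) \neq 0$ forces $s(p) = 6$. Thus $s \equiv 6$ automatically on the open set $W := \{|f| < 1\}$, and only the density of $W$ in $M$ remains.

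The density step splits by orientation, and in both sub-cases the key input is that $\widetilde{h} \equiv 0$ (since in dimension 3 the $K$-contact condition is equivalent to being Sasakian, so $\widetilde{\eta}$ being Sasakian forces $\widetilde{h} \equiv 0$). In the same-orientation case, formula (\ref{lapf3d-so}) then gives $\nabla f \equiv 0$, so $f$ is globally constant by connectedness; the values $\pm 1$ are excluded by the hypothesis $\eta \not\equiv \pm \widetilde{\eta}$, because $\widetilde{\xi} \equiv \pm \xi$ would force $\widetilde{\eta} \equiv \pm \eta$ (both are the $g$-duals of the common Reeb direction); hence $W = M$. In the opposite-orientation case, formula (\ref{lapf3d-oo}) gives $\Delta f = 8 f$; on any nonempty open subset of $\{|f| = 1\}$ the function $f$ would be locally constant at $\pm 1$, yielding $\Delta f \equiv 0 \neq \pm 8$, contradiction. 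So $W$ is dense, and by continuity of the scalar curvature $s \equiv 6$ on all of $M$.

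No serious obstacle is expected: the heavy lifting is done by Proposition \ref{angle3d} and the 3-dimensional Sasakian Ricci formula. The only mild subtlety is the opposite-orientation case, where $f$ need not be globally constant; the Laplacian equation $\Delta f = 8 f$ rules out open level sets at $\pm 1$ and delivers density of $W$ for free.
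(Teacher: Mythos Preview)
Your argument is correct and follows essentially the same route as the paper: use the Sasakian Ricci formula for both structures to force $Ric = 2g$ on the set $\{|f|<1\}$, establish that this set is dense, and conclude constant sectional curvature $1$ by continuity. The only cosmetic difference is in the density step: the paper simply invokes the unique continuation property of $f$ recorded at the end of Proposition~\ref{angle3d}, whereas you unpack this case-by-case (constant $f$ in the same-orientation case via $\nabla f = -\widetilde{\phi}\widetilde{h}\xi = 0$, and the Laplacian obstruction $\Delta f = 8f$ in the opposite-orientation case), which amounts to the same thing.
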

\begin{proof}
From the assumption ${\eta} \not\equiv \pm \widetilde{\eta}$ and Corollary \ref{angle3d}, it follows
that the set of points where the angle function $f = g({\xi}, \widetilde{\xi})$ is not equal to $\pm 1$ is dense
in $M$. On this set, using the Sasakian condition for both ${\xi}$, $\widetilde{\xi}$, it follows that
$Ric = 2g$. In dimension 3, this implies that the metric has constant
sectional curvature $1$ and, by density, this is true on the whole $M$. Thus, $g$ is locally isometric to the standard
round sphere metric $g_0$.
\end{proof}

This concludes the proof of Theorem \ref{dim3sconst}. Certainly it is natural to ask what happens
 if the constant scalar curvature condition is removed. At this time we don't know the answer
 and we leave this as an open question.

\section{Proof of Theorems \ref{codim2} and \ref{dim5}}
This section has a better presentation at the level of the symplectic cone,
so we start with some brief preliminaries on this. It is well known that there is a
dictionary between a contact manifold $(M^{2n+1}, \eta)$ and its symplectic cone
$(\bar M:=M\x\RM^*_+ , \omega)$, with the symplectic form given by
\begin{equation}\label{om}
\omega =r dr\wedge\eta+\frac{r^2}{2}d\eta \; ,
\end{equation}
where $r$ denotes the coordinate on the $\RM^*_+$-factor. The dictionary extends to compatible metrics
as well, see \cite{BG}. If $(g, \phi, \eta, \xi)$ is a contact metric structure on $M$ this corresponds
to an almost K\"ahler structure $(\bar g, J, \omega)$ on $ \bar M$ where $\omega$ is the form defined above,
the cone metric is
$$ \bar g = dr^2 + r^2g \;  ,$$
and the almost complex structure is given by
$$ J = \phi \mbox{ on } \mathcal{D} = {\rm Ker} \; \eta \; , \mbox{ and } J \xi = r \dr \; ,$$
where $\dr$ denotes the vector field $\frac{\pa}{\pa r}$ on $\bar M$, and $r \dr$ is the so called Liouville vector field
on the cone.

We record below (see e.g. \cite{O'N}) formulas relating the Levi-Civita
connections $\n$ and $\nn$ of $(M,g)$ and $(\bar M, \bar g)$.
\begin{equation}\label{vec}
\nn_\dr\dr=0; \; \; \; \nn_X\dr=\nn_\dr X=\frac{1}{r}X; \; \; \;
\nn_XY=\n_XY-rg(X,Y)\dr.
\end{equation}
Using this, we obtain for every vector $X$ and a $p$--form $\theta$ on $M$
\begin{equation}\label{for}
\nn_\dr\theta=-\frac{p}{r}\theta\quad\hbox{and}\quad\nn_X\theta=\n_X\theta-
\frac{1}{r}dr\wedge (i_X \theta),
\end{equation}

\begin{equation}\label{for2}
\nn_\dr dr=0\quad\hbox{and}\quad\nn_Xdr=rX^\flat.
\end{equation}

The curvature tensors $R$, $\bar R$ of $(M, g)$ and $(\bar M, \bar g)$ respectively,
are related by
\begin{equation}\label{cou}
\bar R(\dr,\cd)=0, \; \quad \bar R(X,Y)Z=R(X,Y)Z+g(X,Z)Y-g(Y,Z)X.
\end{equation}
From (\ref{cou}) one can see that $(\bar M, \bar g)$ is flat if and only if
$(M, g)$ has constant curvature 1.

From (\ref{for}) it is easily seen that we have the following relation
on the almost K\"ahler cone $(\bar M, \bar g, J, \omega)$
\begin{equation} \label{nnr} \nn_{\dr} \omega = 0.
\end{equation}
Since $(\bar M,J, \omega)$ is almost K{\"a}hler, the covariant derivative of $\omega$ satisfies
\begin{equation} \label{quasiK}
\nn_{J.} \omega(J., .) = \nn_{.} \omega(J.,J.) = -\nn_{.} \omega(.,.) .
\end{equation}
Using this and (\ref{nnr}), we also immediately have
\begin{equation} \label{nnxi}
\nn_{\xi} \omega = 0.
\end{equation}
This is equivalent to relation (\ref{nablaxiphi}).
In general, using (\ref{for}) if $X \in TM$
\begin{equation} \label{nnX}
\nn_{X} \omega = r^2( X^{\flat} \wedge \eta + \frac{1}{2} \n_X d\eta ),
\end{equation}
which implies the well known fact that $(M,g,\xi,\f,\eta)$ is Sasakian if and only if
$(\bar M,J, \omega)$  is K\"ahler.

We also collect some easy facts about the cone in the case when $(M,g,\xi,\f,\eta)$ is $K$-contact.
\begin{proposition} \label{Kcontobs}
Suppose $(M,g,\xi,\f,\eta)$ is $K$-contact. Then on the cone $(\bar M, \bar g, J, \omega)$, $\xi$ is an automorphism
of the almost K\"ahler structure. Moreover, we have
$$ \mbox{ (i)  } \; \; \nn_{A} \xi = -JA , \; \; \forall A \in T \bar M \; ;\hspace{14cm} $$
$$ \mbox{ (ii) } \; \; \bar R(A, \xi)B = - (\nn_A J)B \; \; \forall A, B \in T \bar M \; .\hspace{14cm} $$
\end{proposition}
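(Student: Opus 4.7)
The plan is to verify each of the three claims using the $K$-contact hypothesis (equivalently $h\equiv 0$) together with the explicit cone formulas (\ref{vec})--(\ref{cou}) and the identity $J\dr=-\frac{1}{r}\xi$, which follows from $J\xi=r\dr$ and $J^2=-{\rm Id}$.

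To prove that $\xi$ is an automorphism of $(\bar g,J,\omega)$, it suffices to show $L_\xi\bar g=0$ and $L_\xi\omega=0$; the compatibility $\omega(\cdot,\cdot)=\bar g(J\cdot,\cdot)$ then forces $L_\xi J=0$. The first vanishes because $\bar g=dr^2+r^2g$, $\xi$ is tangent to $M$ (so $\xi r=0$), and $L_\xi g=0$ by the $K$-contact hypothesis. The second follows immediately from the expression $\omega=r\,dr\wedge\eta+\frac{r^2}{2}d\eta$ and the relations (\ref{Lxieta}).

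For (i), since both sides are $C^\infty$-linear in $A$, it suffices to verify $\nn_A\xi=-JA$ when $A\in\{\dr,\xi\}$ or $A=X\in\mathcal{D}$. For $A=\dr$, (\ref{vec}) gives $\nn_{\dr}\xi=\frac{1}{r}\xi$, matching $-J\dr=\frac{1}{r}\xi$. For $A=\xi$, (\ref{vec}) together with $\n_\xi\xi=0$ (itself a consequence of (\ref{nablaXxi}) with $h=0$) gives $\nn_\xi\xi=-r\dr=-J\xi$. For $A=X\in\mathcal{D}$, (\ref{vec}) reduces to $\nn_X\xi=\n_X\xi$, which equals $-\phi X=-JX$ by (\ref{nablaXxi}) and the $K$-contact condition.

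For (ii), I would first use (i) to compute, for arbitrary $A,B\in T\bar M$,
\begin{align*}
\bar R(A,B)\xi &= \nn_A\nn_B\xi-\nn_B\nn_A\xi-\nn_{[A,B]}\xi \\
&= -\nn_A(JB)+\nn_B(JA)+J[A,B] \\
&= (\nn_B J)A-(\nn_A J)B,
\end{align*}
where $-J\nn_A B+J\nn_B A+J[A,B]=0$ by torsion-freeness. The cleanest route to the stated form is to invoke Kostant's identity for a Killing field $V$, namely $\nn_A\nn_B V-\nn_{\nn_A B}V=\bar R(A,V)B$, applied with $V=\xi$: the left-hand side equals $-(\nn_A J)B$ by (i) and torsion-freeness, giving (ii) at once. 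Alternatively, one may combine the displayed formula for $\bar R(A,B)\xi$ with the pair symmetry $\bar R(A,\xi,B,C)=\bar R(B,C,A,\xi)$, rewrite via $g((\nn_X J)Y,Z)=(\nn_X\omega)(Y,Z)$, and close the argument using the closedness $d\omega=0$ (the almost K\"ahler condition), whose cyclic expansion $(\nn_A\omega)(B,C)+(\nn_B\omega)(C,A)+(\nn_C\omega)(A,B)=0$ supplies precisely the sign needed. The only mild obstacle throughout is bookkeeping of signs from $J\dr=-\frac{1}{r}\xi$ and from the Bianchi step; the computation itself is entirely routine.
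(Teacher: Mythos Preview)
Your proof is correct and follows essentially the same approach as the paper: the paper also derives (i) from (\ref{nablaXxi}) (with $h=0$) and (\ref{vec}), and obtains (ii) precisely via the Killing-field identity $\bar R(A,\xi)B=\nn_A\nn_B\xi-\nn_{\nn_A B}\xi$ combined with (i). Your treatment is simply more explicit, and your alternative route for (ii) via $d\omega=0$ is unnecessary but valid.
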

\begin{proof} The fact that $\xi$ is an automorphism of the almost K\"ahler cone is clear.
Relation (i) follows straight from (\ref{nablaXxi}) in the $K$-contact case and the formulas
(\ref{vec}). As $\xi$ is Killing for $\bar g$, we have that
\begin{equation} \label{CurvKill}
 \bar R(A, \xi)B = \nn^2_{A,B} \xi = \nn_A \nn_B \xi - \nn_{\nn_A B} \xi \; .
\end{equation}
On the other hand, from (i)  $\nn^2_{A,B} \xi = - (\nn_A J)B $, so (ii) follows.
\end{proof}

A consequence of the previous proposition is the following:
\begin{proposition} \label{twoKcont}
Let $(M^{2n+1}, g)$ be a Riemannian manifold admitting two compatible $K$-contact structures
$(g, \phi, \xi, \eta)$, $(g, \widetilde{\phi}, \widetilde{\xi}, \widetilde{\eta})$. Denote by $(\bar g, J, \omega), \; (\bar g, \widetilde{J}, \widetilde{\omega})$ the corresponding
almost K\"ahler structures on the cone $\bar M$. Then for any $ A \in T \bar M $ we have
\begin{equation} \label{commutation}
[J, \widetilde{J}] A + \nn_A([\xi,\widetilde{\xi}]) = \bar R(\xi,\widetilde{\xi})A = - (\nn_{\xi} \widetilde{J})A = (\nn_{\widetilde{\xi}} J)A\; ,
\end{equation}
where $[J, \widetilde{J}]$ denotes the commutator of the two almost complex structures.
\end{proposition}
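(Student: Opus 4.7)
My plan is to derive the identity directly from Proposition \ref{Kcontobs} applied to each of the two $K$-contact structures, using essentially only (i) the formula $\nn_A\xi = -JA$, $\nn_A\widetilde{\xi} = -\widetilde{J}A$, (ii) the curvature identity $\bar R(A,\xi)B = -(\nn_A J)B$ and its analogue for $\widetilde{\xi}$, plus the first Bianchi identity.

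First I would handle the two easy equalities on the right. The identity $\bar R(\xi,\widetilde{\xi})A = -(\nn_\xi \widetilde{J})A$ is just Proposition \ref{Kcontobs}(ii) applied to the second structure, with the slot $A$ specialized to $\xi$. Similarly, $\bar R(\xi,\widetilde{\xi})A = -\bar R(\widetilde{\xi},\xi)A = (\nn_{\widetilde{\xi}} J)A$ follows from the antisymmetry of $\bar R$ together with Proposition \ref{Kcontobs}(ii) applied to the first structure with $A$ replaced by $\widetilde{\xi}$.

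The main equality is $[J,\widetilde{J}]A + \nn_A([\xi,\widetilde{\xi}]) = \bar R(\xi,\widetilde{\xi})A$, and this is where the bulk of the computation sits. I would proceed in two steps. First, using (i) I rewrite the bracket as
\[
[\xi,\widetilde{\xi}] = \nn_\xi \widetilde{\xi} - \nn_{\widetilde{\xi}}\xi = -\widetilde{J}\xi + J\widetilde{\xi},
\]
and then differentiate by $\nn_A$ and apply (i) again inside the Leibniz expansion, obtaining
\[
\nn_A[\xi,\widetilde{\xi}] = (\nn_A J)\widetilde{\xi} - (\nn_A \widetilde{J})\xi - [J,\widetilde{J}]A,
\]
so that $[J,\widetilde{J}]A + \nn_A([\xi,\widetilde{\xi}]) = (\nn_A J)\widetilde{\xi} - (\nn_A \widetilde{J})\xi$. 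Second, I would match this to $\bar R(\xi,\widetilde{\xi})A$ via the first Bianchi identity
\[
\bar R(\xi,\widetilde{\xi})A + \bar R(\widetilde{\xi},A)\xi + \bar R(A,\xi)\widetilde{\xi} = 0,
\]
feeding in Proposition \ref{Kcontobs}(ii) for both structures: $\bar R(A,\xi)\widetilde{\xi} = -(\nn_A J)\widetilde{\xi}$ and $\bar R(A,\widetilde{\xi})\xi = -(\nn_A\widetilde{J})\xi$. This yields $\bar R(\xi,\widetilde{\xi})A = (\nn_A J)\widetilde{\xi} - (\nn_A \widetilde{J})\xi$, which is exactly what the bracket calculation produced.

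There is no real obstacle here; the proof is bookkeeping around Proposition \ref{Kcontobs}. The only point I would be careful about is sign conventions when passing between the Killing/curvature identity $\bar R(A,\xi)B = \nn^2_{A,B}\xi$ and its rewritten form $-(\nn_A J)B$, since it is easy to introduce a spurious sign when permuting $\xi$ through the slots of $\bar R$, and likewise when applying Bianchi. Once those signs are tracked consistently, both lines of the chain of equalities land on the common expression $(\nn_A J)\widetilde{\xi} - (\nn_A \widetilde{J})\xi$, completing the proof.
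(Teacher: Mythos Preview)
Your proof is correct and follows essentially the same route as the paper: both arguments use Proposition~\ref{Kcontobs} together with the first Bianchi identity, and the last two equalities come directly from part (ii). The only cosmetic difference is that the paper subtracts the two Killing curvature identities $\bar R(A,\xi)\widetilde{\xi} = \nn_A\nn_{\widetilde{\xi}}\xi - \nn_{\nn_A\widetilde{\xi}}\xi$ and $\bar R(A,\widetilde{\xi})\xi = \nn_A\nn_{\xi}\widetilde{\xi} - \nn_{\nn_A\xi}\widetilde{\xi}$ directly (the first-order terms combine into $-\nn_A[\xi,\widetilde{\xi}]$ and the zeroth-order terms into $-[J,\widetilde{J}]A$), whereas you first differentiate $[\xi,\widetilde{\xi}] = J\widetilde{\xi} - \widetilde{J}\xi$ and then invoke (ii); these are the same computation, just grouped differently.
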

\begin{proof} Applying relation (\ref{CurvKill}) for $\xi$ and $\widetilde{\xi}$ we have
$$ \bar R(A, \xi) \widetilde{\xi} = \nn_A \nn_{ \widetilde{\xi}} \xi - \nn_{\nn_A \widetilde{\xi}} \xi \;  ; $$
$$ \bar R(A, \widetilde{\xi}) \xi = \nn_A \nn_{ \xi } \widetilde{\xi} - \nn_{\nn_A \xi } \widetilde{\xi} \;  . $$
Subtracting the two relations and using Bianchi's identity we get the first equality in (\ref{commutation}). The other equalities
in (\ref{commutation}) are consequences of (ii) of Proposition \ref{Kcontobs}.
\end{proof}


The next proposition deals with the case $ [\xi,\widetilde{\xi}] \not\equiv 0$ of our Sasakian--$K$-contact problem.
\begin{proposition} \label{SKnotcommuting}
Suppose $(M^{2n+1}, g)$ is a complete manifold admitting a Sasakian structure $(g, \phi, \xi, \eta)$
and a second compatible $K$-contact structure $(g, \widetilde{\phi}, \widetilde{\xi}, \widetilde{\eta})$
with $ [\xi,\widetilde{\xi}] \not\equiv 0$. Then $(M^{2n+1}, g)$ is covered by the round sphere $(S^{2n+1}, g_0)$, or
$n= 2k+1$ and $(M^{4k+3}, g)$ is 3-Sasakian. Moreover, the structure $(g, \widetilde{\phi}, \widetilde{\xi}, \widetilde{\eta})$ must be also
Sasakian.
\end{proposition}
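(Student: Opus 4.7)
My plan is to reduce the statement to Theorem \ref{tyt} by first proving that $(g, \widetilde\eta, \widetilde\xi, \widetilde\phi)$ must itself be Sasakian under the hypotheses; once this is shown, Theorem \ref{tyt} delivers the dichotomy of spheres vs.\ 3-Sasakian manifolds verbatim.

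Passing to the symplectic cone $(\bar M, \bar g, J, \omega)$ of $(g, \eta)$, the Sasakian assumption is equivalent to $\nn J = 0$, and the second $K$-contact structure lifts to an almost K\"ahler $(\bar g, \widetilde J, \widetilde\omega)$. Feeding $\nn_{\widetilde\xi} J = 0$ into Proposition \ref{twoKcont} collapses it to the three identities
\[
\nn_\xi \widetilde J = 0, \qquad \bar R(\xi, \widetilde\xi) = 0, \qquad \nn_A Z = -[J, \widetilde J]\, A,
\]
where $Z := [\xi, \widetilde\xi]$. By hypothesis $Z \not\equiv 0$; a direct computation on $M$ using $\nabla_X \xi = -\phi X$ and $\nabla_X \widetilde\xi = -\widetilde\phi X$ identifies $Z = \phi\widetilde\xi - \widetilde\phi\xi$, which is orthogonal to both $\xi$ and $\widetilde\xi$, and which, as the bracket of Killing fields, is itself a Killing vector field for $g$.

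We therefore have on $M^{2n+1}$ three Killing fields $\xi, \widetilde\xi, Z$, with $\xi$ a Sasakian Reeb field, $\widetilde\xi$ a $K$-contact Reeb field, and $[\xi, \widetilde\xi] = Z \neq 0$. After reducing to unit orthogonal Killing fields, the Belgun--Moroianu--Semmelmann characterization cited in the introduction yields a weakly $K$-contact 3-structure on $M$, in particular forcing $\dim M = 4k+3$ and a splitting $TM = D_+ \oplus D_- \oplus V$. The parallelism of $J$ on $\bar M$, together with $\nn_A Z = -[J, \widetilde J]A$, should rule out the anti-quaternionic summand $D_+$: on $D_+$ the anti-quaternionic relations between $J$ and $\widetilde J$ conflict with the identity $\nn_A Z = -[J, \widetilde J] A$ combined with $J$ being parallel. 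Hence $D_+ = 0$, the structure is a genuine contact metric 3-structure, and Kashiwada's theorem promotes it to 3-Sasakian; in particular $(g, \widetilde\eta)$ is Sasakian, and Theorem \ref{tyt} concludes.

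The principal technical obstacle I foresee is the orthonormalization required by BMS: the angle function $f = g(\xi, \widetilde\xi)$ satisfies $\xi(f) = \widetilde\xi(f) = Z(f) = 0$, but $\nabla f = \phi\widetilde\xi + \widetilde\phi\xi$ need not vanish a priori, and $\widetilde\xi - f\xi$ is Killing only when $f$ is locally constant (since $L_{\widetilde\xi - f\xi} g = -(df \otimes \eta + \eta \otimes df)$). The delicate step is to show, using the combined Sasakian and $K$-contact identities above together with the orientation relation $\eta \wedge (d\eta)^n = \pm\, \widetilde\eta \wedge (d\widetilde\eta)^n$ from Proposition \ref{laplangle}(a), that $f$ must in fact be locally constant, or alternatively to apply BMS/Kashiwada without first orthonormalizing. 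Making this step rigorous is, in my view, the only nontrivial part of the proof.
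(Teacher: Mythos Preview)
Your proposal has a genuine gap that you yourself identify, and it stems from inverting the logic of the argument in a way that makes the problem much harder than it is. You attempt to prove first that $(g,\widetilde\eta)$ is Sasakian and only then invoke Theorem~\ref{tyt}; this forces you through the BMS machinery, where you get stuck on the orthonormalization (indeed $\widetilde\xi - f\xi$ is not Killing unless $f$ is constant, and you have no argument for that), and on the vague ``should rule out $D_+$'' step, which you do not justify.

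The paper's proof sidesteps all of this with a single observation you are missing: since $\widetilde\xi$ is Killing, its flow $\psi_t$ consists of isometries of $g$, and hence the pull-back $(g,\psi_t^*\phi,\psi_t^*\eta,(\psi_t^{-1})_*\xi)$ is again a $g$-compatible \emph{Sasakian} structure. The hypothesis $[\xi,\widetilde\xi]\not\equiv 0$ ensures that for some $t$ this pulled-back structure is genuinely different from the original. Theorem~\ref{tyt} now applies directly to this pair of Sasakian structures, yielding the sphere/3-Sasakian dichotomy for $(M,g)$ with no reference to $\widetilde\eta$ at all. Only \emph{afterwards} does one deduce that $(g,\widetilde\eta)$ is Sasakian, and this is immediate from known results: in the sphere case by Olszak's theorem (a contact metric structure of constant sectional curvature in dimension $\geq 5$ is Sasakian), and in the 3-Sasakian case by Boyer--Galicki (a complete $K$-contact Einstein manifold is Sasakian). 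Dimension $3$ is trivial since $K$-contact already means Sasakian there.

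So the point is that you already have a whole family of $g$-compatible Sasakian structures at hand---the $\psi_t$-translates of $\eta$---and Theorem~\ref{tyt} can be applied to those. There is no need to analyse $\widetilde\eta$ first, and the BMS/Kashiwada route, with its attendant orthogonality obstacle, never arises.
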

\begin{proof} It is enough to consider the case $2n+1 \geq 5$. As $\widetilde{\xi}$ is Killing, it induces a
1-parameter group of isometries $\psi_t$. The pull-back by $\psi_t$ of the Sasakian structure $( g, \phi, {\eta},\xi )$ yields a family
of $g$-compatible Sasakian structures. By Theorem \ref{tyt} of Tachibana-Yu and Tanno, $(M^{2n+1}, g)$ must be isometric to a standard sphere $(S^{2n+1}, g_0)$
or is 3-Sasakian. That the structure $(g, \widetilde{\eta}, \widetilde{\xi})$ is also Sasakian follows in the first case from the result of Olszak \cite{Ol}
that in dimensions greater or equal to 5 a contact metric manifold of constant sectional curvature is necessarily Sasakian. If the manifold is 3-Sasakian
then it is Einstein and one can use the result of Boyer-Galicki \cite{bg} that any complete $K$-contact Einstein manifold is Sasakian.
\end{proof}

In view of this result, in the remaining of this section we will make the assumption
$ [\xi,\widetilde{\xi}] = 0$.


\begin{proposition} \label{twoKcontcomm}
Suppose $(M^{2n+1}, g)$ is a Riemannian manifold admitting two compatible $K$-contact structures
$(g, \phi, \xi, \eta)$, $(g, \widetilde{\phi}, \widetilde{\xi}, \widetilde{\eta})$ with $ [\xi,\widetilde{\xi}] = 0$. Then

\vspace{0.1cm}

(i) $\widetilde{\phi} \xi = \phi \widetilde{\xi}$.

\vspace{0.1cm}

(ii) The angle-function $f = g(\xi, \widetilde{\xi})$ has gradient given by
$\nabla f =  2 \phi \widetilde{\xi}$. Consequently, the only critical values of $f$ are $\pm 1$.
%
%
\end{proposition}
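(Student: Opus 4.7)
The plan is to derive both parts directly from the $K$-contact simplification $h \equiv 0 \equiv \widetilde{h}$ applied to formulas already established in the preliminaries, so no new technical machinery is needed.

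For part (i), I would start from relation (\ref{nablaXxi}) of Proposition \ref{curvxi} applied to each structure. In the $K$-contact setting these reduce to
\begin{equation*}
\nabla_X \xi = -\phi X, \qquad \nabla_X \widetilde{\xi} = -\widetilde{\phi} X, \quad \forall X \in TM.
\end{equation*}
Expanding the Lie bracket through the Levi-Civita connection then gives
\begin{equation*}
[\xi, \widetilde{\xi}] = \nabla_\xi \widetilde{\xi} - \nabla_{\widetilde{\xi}} \xi = -\widetilde{\phi}\xi + \phi \widetilde{\xi},
\end{equation*}
and the hypothesis $[\xi, \widetilde{\xi}] = 0$ immediately yields $\phi \widetilde{\xi} = \widetilde{\phi} \xi$.

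For part (ii), I would feed $h = 0 = \widetilde{h}$ into the general gradient formula (\ref{gradfgen}) from Proposition \ref{laplangle}, which collapses to $\nabla f = \phi \widetilde{\xi} + \widetilde{\phi} \xi$. Substituting the identity from (i) produces $\nabla f = 2 \phi \widetilde{\xi}$, which is the claimed expression. To finish, I would note that $\ker \phi = \mathrm{Span}(\xi)$ by (\ref{phisq}), so $\nabla f = 0$ forces $\widetilde{\xi}$ to be collinear with $\xi$; since both Reeb fields are $g$-unit vectors, $\widetilde{\xi} = \pm \xi$ at any critical point, and therefore $f = g(\xi, \widetilde{\xi}) = \pm 1$ there.

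Because the entire argument is a two-line consequence of formulas already available, there is no real obstacle; the only thing to be careful about is matching signs in the bracket computation and confirming that $\xi, \widetilde{\xi}$ are $g$-unit (which is built into the compatibility of the metric with each contact structure via $g(\xi, \cdot) = \eta$ and $g(\widetilde{\xi}, \cdot) = \widetilde{\eta}$).
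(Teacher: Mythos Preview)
Your proposal is correct and follows essentially the same route as the paper: compute the bracket via $\nabla_X\xi=-\phi X$ and $\nabla_X\widetilde{\xi}=-\widetilde{\phi}X$ to get (i), then plug $h=\widetilde{h}=0$ and (i) into the general gradient formula (\ref{gradfgen}) to get (ii). Your added sentence explaining why the critical values are exactly $\pm 1$ (via $\ker\phi=\mathrm{Span}(\xi)$ and the unit length of the Reeb fields) makes explicit what the paper leaves implicit.
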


\begin{proof} Part (i) follows from
$$ 0 =  [\xi,\widetilde{\xi}] = \nabla_{\xi} \widetilde{\xi} - \nabla_{\widetilde{\xi}} \xi = - \phi \widetilde{\xi} + \widetilde{\phi} \xi ,$$
and part (ii) follows immediately from (i) and relation (\ref{gradfgen}).
%
%

\end{proof}

Next let us define some distributions that will be used in the rest of the argument.
Consider
$(M^{2n+1}, g)$ a Riemannian manifold admitting two compatible $K$-contact structures
$(g, \phi, \xi, \eta)$, $(g, \widetilde{\phi}, \widetilde{\xi}, \widetilde{\eta})$ with $ [\xi,\widetilde{\xi}] = 0$
as in Proposition \ref{twoKcontcomm}.  Denote by $U$ the (open, dense) set of regular points in $M$ of the angle function $f$, i.e.
$p \in U$ iff $\xi(p) \neq \pm \widetilde{\xi}(p)$.
On $U$ consider the 3-dimensional distribution
$\mathcal{V} = {\rm Span}(\xi, \widetilde{\xi}, \phi \widetilde{\xi})$ and let $ \mathcal{H} = \mathcal{V}^{\perp}$
the orthogonal complement with respect to the metric $g$. It is easily checked that
the ``horizontal'' distribution $ \mathcal{H} $ is both $\phi$ and $\widetilde{\phi}$-invariant.

Denote by $\bar U = U \x\RM^*_+ $ the corresponding subset
in the cone $\bar M$ and consider on $\bar U$ the distributions
$\mathcal{\bar V} = {\rm Span}(\xi, \widetilde{\xi}, \phi \widetilde{\xi}, \dr)$ and $\mathcal{H} = \mathcal{\bar V}^{\perp}$, using
a slight abuse of notation. The reader can check easily that $\mathcal{\bar V}$ and $\mathcal{H}$ are invariant with respect to both
$J$ and $ \widetilde{J}$. Moreover, restricted to $\mathcal{\bar V}$, $J$ and $\widetilde{J}$ commute.
In other words,  over $\bar U$ the distribution $\mathcal{\bar V}$ further splits in 2-dimensional distributions
$\mathcal{\bar V} = \mathcal{\bar V}^+ \oplus \mathcal{\bar V}^-$,
so that $J= \widetilde{J}$ on $\mathcal{\bar V}^+$ and $J= - \widetilde{J}$ on $\mathcal{\bar V}^-$.
In fact, one directly checks that
$$  \mathcal{\bar V}^+ = {\rm Span} ( \xi + \widetilde{\xi}, -(1 + f) r \dr + \phi \widetilde{\xi}) \; , \; \; \;
 \mathcal{\bar V}^- = {\rm Span} ( \xi - \widetilde{\xi}, -(1 - f) r \dr - \phi \widetilde{\xi}) \; . $$
Further, we have the following:
\begin{proposition} \label{lemmadistrib}
Let $(M^{2n+1}, g)$ be a Riemannian manifold admitting two compatible $K$-contact structures
$(g, \phi, \xi, \eta)$, $(g, \widetilde{\phi}, \widetilde{\xi}, \widetilde{\eta})$ with $ [\xi,\widetilde{\xi}] = 0$.
Assume further that $2n+1 = 5$, or that one of the structures is Sasakian. Then

(i)  $[J, \widetilde{J}] = 0$ on the cone $\bar M$;

(ii) The normalized gradient flow on $(M, g)$ of the angle function $f = g(\xi, \widetilde{\xi})$ consists of geodesics;

(iii) The distribution $\mathcal{H}$ is parallel along the gradient flow of $f$.
\end{proposition}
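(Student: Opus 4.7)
The plan is to handle (i), (ii) and (iii) in sequence, relying on two inputs: the K\"ahler identity $\bar\nabla J = 0$ whenever $(g, \eta)$ is Sasakian, and the $K$-contact formula $\nabla_X \xi = -\phi X$ (with the analogue for $\widetilde\xi$), which holds here since $h = \widetilde h = 0$ for both structures.

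For (i), in the Sasakian subcase $\bar\nabla J = 0$ forces $\bar\nabla_{\widetilde\xi} J = 0$, so identity (\ref{commutation}) together with $[\xi, \widetilde\xi] = 0$ immediately yields $[J, \widetilde J] = 0$ on all of $T\bar M$. In the dimension-5 subcase the horizontal distribution $\mathcal H$ is $2$-dimensional, and both $J|_{\mathcal H}$ and $\widetilde J|_{\mathcal H}$ are $\bar g$-compatible almost complex structures on a $2$-plane; they must therefore agree up to sign and in particular commute, and combined with the commutativity on $\mathcal{\bar V}$ already established just before the proposition statement, this gives $[J, \widetilde J] = 0$ throughout $T\bar M$.

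For (ii) and (iii), I set $E := \phi\widetilde\xi = \widetilde\phi\xi$, so that $\nabla f = 2E$ by Proposition \ref{twoKcontcomm}. The two $K$-contact relations immediately produce $\nabla_E \xi = -\phi E = \widetilde\xi - f\xi$ and $\nabla_E \widetilde\xi = -\widetilde\phi E = \xi - f\widetilde\xi$, both already in $\mathcal V$. To compute $\nabla_E E$, I would expand $\nabla_E(\phi\widetilde\xi) = (\nabla_E \phi)\widetilde\xi + \phi \nabla_E \widetilde\xi$ and plug in the Sasakian formula $(\nabla_X\phi)Y = g(X,Y)\xi - \eta(Y) X$; the cross terms collapse and the result is the clean identity $\nabla_E E = -2fE$. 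Since this is parallel to $E$, rescaling to unit norm automatically produces a geodesic (the parallel component of the derivative of a unit vector field must vanish), proving (ii). The same three calculations show that $\nabla_E$ sends each of $\xi$, $\widetilde\xi$, $E$ back into $\mathcal V$, hence preserves $\mathcal V$ as a subspace; by the metric identity $g(\nabla_E Y, Z) = -g(Y, \nabla_E Z)$ this is exactly equivalent to $\nabla_E Y \in \mathcal H$ for every $Y \in \mathcal H$, establishing (iii).

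I anticipate the main obstacle to be the dimension-5 non-Sasakian variant of (ii) and (iii): without the Sasakian formula for $\nabla\phi$ the computation of $\nabla_E E$ no longer collapses to a multiple of $E$. In that case I would try to exploit the pointwise identity $\phi|_{\mathcal H} = \pm \widetilde\phi|_{\mathcal H}$ coming from (i), combined with the general contact-metric identities for $\nabla\phi$ and $\nabla\widetilde\phi$, to replicate the cancellations; this is the only piece of the argument that is not a routine calculation.
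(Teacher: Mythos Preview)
Your argument for (i) matches the paper's exactly, and your treatment of (ii) and (iii) in the Sasakian case is essentially the paper's computation presented in the dual direction: you show $\nabla_E$ preserves $\mathcal V$, while the paper checks directly that $\nabla_N A$ is orthogonal to $N$, $\xi$, $\widetilde\xi$ for $A\in\mathcal H$. Your explicit formula $\nabla_E E = -2fE$ is correct and is what the paper means by ``one checks after a computation that $\nabla_N N=0$.''

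You are right that the only genuine issue is (ii) in the $5$-dimensional non-Sasakian case, but your proposed route through $\phi|_{\mathcal H}=\pm\widetilde\phi|_{\mathcal H}$ is not how the paper closes the gap, and it is not clear that route would work, since the obstruction lives in $\mathcal V$, not $\mathcal H$. The paper instead extracts a further consequence of (i): once $[J,\widetilde J]=0$ and $[\xi,\widetilde\xi]=0$, relation~(\ref{commutation}) forces $\bar\nabla_{\widetilde\xi}J=0$ and $\bar\nabla_{\xi}\widetilde J=0$. Combined with the universal identities $\bar\nabla_{\partial_r}J=\bar\nabla_{\xi}J=0$ (relations (\ref{nnr}), (\ref{nnxi})) and the almost-K\"ahler identity (\ref{quasiK}), which yields $\bar\nabla_{JX}J=0$ whenever $\bar\nabla_X J=0$, one obtains
\[
\bar\nabla_V J=\bar\nabla_V\widetilde J=0\quad\text{for every }V\in\mathcal{\bar V},
\]
in particular for $V=\phi\widetilde\xi$. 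This single relation replaces the Sasakian formula for $\nabla\phi$ in your computation of $\nabla_E E$: lifting to the cone and using $\bar\nabla_E J=0$ in place of (\ref{phiSasaki}), the same collapse occurs and $\nabla_E E=-2fE$ still holds. With that in hand, your argument for (ii) and (iii) goes through verbatim.
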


\begin{proof} (i) In the case one of the structures is Sasakian, the commutation of $J$ and $\widetilde{J}$ follows directly
from relation (\ref{commutation}). In the case $2n+1 = 5$, even without assuming that one of the structures is Sasakian,
note that the commutation of $J$ and $\widetilde{J}$ on $\mathcal{\bar V}$ holds and that $dim(\mathcal{H}) = 2$.
Thus, in this case, on $\mathcal{H}$, $J = \widetilde{J}$, or $J = - \widetilde{J}$, so $J$ and $\widetilde{J}$ commute everywhere.
Note also that in this case, virtue of relation (\ref{commutation}), we have
\begin{equation} \label{vertpar}
\bar \nabla_{V} J = \bar \nabla_{V} \widetilde{J} = 0, \; \; \forall V \in \mathcal{\bar V} \; .
\end{equation}
even if none of the structures is assumed Sasakian. This obviously also holds in all dimensions if one of the structures is assumed Sasakian.

(ii) Let $N=\frac{\phi\widetilde{\xi}}{\|\phi\widetilde{\xi}\|}$ be the normalized gradient vector field of the angle function $f$
on the regular set $U$. Using the Sasakian condition for one of the structures, one checks after a computation
that $\nabla_N N = 0$. Using relation (\ref{vertpar}), the same conclusion holds if $2n+1 = 5$ even without the Sasakian assumption.

(iii) Let $A$ be any section of the bundle $\mathcal{H}$. One has
$$
\begin{array} {l} <\nabla_NA,N>=-<A,\nabla_NN>=0\\
<\nabla_NA, \xi > =-<A,\nabla_N\xi > = <A,\phi N>=0\\
<\nabla_NA,\widetilde{\xi}> =-<A,\widetilde{\phi}N>=0 ,
\end{array}
$$
thus $\nabla_N A \in \mathcal{H}$.
\end{proof}

The following proposition is one more step toward the proof of Theorem \ref{codim2}. Together with Proposition \ref{SKnotcommuting}, it
already implies Theorem \ref{dim5}.
\begin{proposition} \label{morethandim5}
Suppose $(M^{2n+1}, g, \phi, \xi, \eta)$ is a Sasakian manifold admitting a second compatible $K$-contact structure
 $(g, \widetilde{\phi}, \widetilde{\xi}, \widetilde{\eta})$ with $ [\xi,\widetilde{\xi}] = 0$. Then $\phi$ and $\widetilde{\phi}$
 commute on $\mathcal{H} = {\rm Span} (\xi, \widetilde{\xi}, \phi \widetilde{\xi})^{\perp}$.
If, additionally, we have that
 $\phi = \widetilde{\phi}$ on $\mathcal{H}$, or $\phi = - \widetilde{\phi}$ on $\mathcal{H}$,
 then $(g, \widetilde{\phi}, \widetilde{\xi}, \widetilde{\eta})$ is also Sasakian.
 \end{proposition}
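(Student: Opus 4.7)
The commutation of $\phi$ and $\widetilde{\phi}$ on $\mathcal{H}$ is immediate from Proposition~\ref{lemmadistrib}(i). Since $\mathcal{H}$ is orthogonal to both $\xi$ and $\widetilde{\xi}$, it sits inside $\ker \eta \cap \ker \widetilde{\eta}$, on which $J$ and $\widetilde{J}$ reduce to $\phi$ and $\widetilde{\phi}$ respectively; as $[J, \widetilde{J}] = 0$ on all of $\bar M$, this forces $[\phi, \widetilde{\phi}] = 0$ on $\mathcal{H}$.

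For the Sasakian assertion, I plan to pass to the symplectic cone and prove $\bar \nabla \widetilde{J} = 0$, which is the K\"ahler property of $(\bar M, \bar g, \widetilde{J})$ and is equivalent to $(g, \widetilde{\phi}, \widetilde{\xi}, \widetilde{\eta})$ being Sasakian. Since $\bar \nabla J = 0$ by the Sasakian hypothesis on $\eta$, the problem reduces to $\bar \nabla T = 0$ for $T := \widetilde{J} - J$. Because $J$ and $\widetilde{J}$ commute on $\bar M$ and each squares to $-I$, the product $J \widetilde{J}$ is a $\bar g$-orthogonal involution, yielding a $J$-invariant, $\bar g$-orthogonal splitting $T \bar M = E_+ \oplus E_-$ into the $\pm 1$ eigenspaces of $J \widetilde{J}$, on which $\widetilde{J}$ acts as $\mp J$ respectively. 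The hypothesis $\widetilde{\phi} = \pm \phi$ on $\mathcal{H}$, together with the already known relations $\widetilde{J} = J$ on $\mathcal{\bar V}^+$ and $\widetilde{J} = -J$ on $\mathcal{\bar V}^-$, pins down $E_+$ and $E_-$ completely. Writing $T = -2 J P_+$ with $P_+$ the orthogonal projection onto $E_+$, and using $\bar \nabla J = 0$, the identity $\bar \nabla T = 0$ becomes equivalent to the parallelism of the distributions $E_{\pm}$.

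The parallelism is the main step. In the case $\widetilde{\phi} = \phi$ on $\mathcal{H}$, the distribution $E_+ = \mathcal{\bar V}^-$ is spanned, on the regular set of $f$, by $\zeta := \widetilde{\xi} - \xi$ and $J \zeta$. Applying Proposition~\ref{Kcontobs}(i) to each $K$-contact structure, for every $A \in T \bar M$
$$ \bar \nabla_A \zeta \;=\; J A - \widetilde{J} A \;=\; -\, T A . $$
Since $T$ lands in $E_+$ by its very description, this gives $\bar \nabla_A \zeta \in E_+$; then $\bar \nabla J = 0$ forces $\bar \nabla_A (J \zeta) = J \bar \nabla_A \zeta \in E_+$ as well, so every smooth section of $E_+$ on the regular set has covariant derivative again in $E_+$. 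Hence $E_+$, and therefore $E_-$, is parallel on the open dense regular set of $f$, and by continuity $\bar \nabla T = 0$ on all of $\bar M$. The case $\widetilde{\phi} = -\phi$ on $\mathcal{H}$ is handled symmetrically, with $E_- = \mathcal{\bar V}^+$ spanned by $\sigma := \xi + \widetilde{\xi}$ and $J \sigma$, using the analogous identity $\bar \nabla_A \sigma = -(J + \widetilde{J}) A \in E_-$. The delicate point I anticipate is the behaviour at the critical locus of $f$, where the splitting may drop rank; this should be resolved by continuous extension from the regular open dense set.
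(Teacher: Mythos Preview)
Your argument is correct. Both the paper and you work on the cone and ultimately rely on the identity $\bar\nabla_A(\widetilde{\xi}-\xi)=JA-\widetilde{J}A$ from Proposition~\ref{Kcontobs}(i), but the organization is genuinely different. The paper writes down an explicit formula
\[
\widetilde{\omega}=\omega-\frac{1}{2(1-f)}(\widetilde{\eta}-\eta)\wedge J(\widetilde{\eta}-\eta)
\]
and verifies $\bar\nabla_H\widetilde{\omega}=0$ for horizontal $H$ directly (using that $JH-\widetilde{J}H=0$ on $\mathcal{H}$), then invokes \eqref{vertpar} for the vertical directions. You instead recast the problem as the parallelism of the $2$-plane $E_+=\mathcal{\bar V}^-$ and prove it in one stroke for \emph{all} directions $A$, observing that $\bar\nabla_A\zeta=-TA$ automatically lands in $E_+$. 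Your route avoids the explicit $\widetilde{\omega}$-formula and the horizontal/vertical split, and has the pleasant interpretation that $(\bar M,\bar g)$ carries a parallel $2$-plane field (a de Rham--type reducibility statement); the paper's route, on the other hand, makes the relation between the two K\"ahler forms fully explicit. Your treatment of the critical locus is also fine: $T=\widetilde{J}-J$ is globally smooth, so $\bar\nabla T=0$ on the open dense regular set extends by continuity.
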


 \begin{proof} The commutation of $\phi$ and $\widetilde{\phi}$
 on $\mathcal{H}$ certainly follows from the fact that $J$ and $\widetilde{J}$ commute on the entire cone $\bar M$.
 The assumption $\phi = \widetilde{\phi}$ on $\mathcal{H}$, or $\phi = - \widetilde{\phi}$ on $\mathcal{H}$
 translates in $J = \widetilde{J}$ or $J = - \widetilde{J}$ on $\mathcal{H}$.
 Consider the case $J= \widetilde{J}$ on the whole $\mathcal{\bar H}$ (the other case follows from this by replacing $J$ by $-J$).
Then a simple computation shows
$$ \widetilde{\omega} = \omega - \frac{1}{2(1 - f)} (\widetilde{\eta} - \eta) \wedge J (\widetilde{\eta} - \eta) =
\omega - \frac{1}{2(1 - f)} (\widetilde{\eta} - \eta) \wedge d(r^2 (1 - f)) \; .$$

For any vector field $H \in \mathcal{\bar H}$, we have $H(f) = 0$, but also
$$ \bar \nabla_H (\widetilde{\eta} - \eta) = 0 \; , \; \;  \bar \nabla_H (J(\widetilde{\eta} - \eta)) = 0 . $$
The first equality follows because
$ \bar \nabla_H (\widetilde{\xi} - \xi) = - \widetilde{J} H + JH = 0 $. The second equality follows from the first
and using the assumption that $(g, \phi, \xi)$ is Sasakian, hence $\bar \nabla J = 0$.
Thus, $\bar \nabla_H \widetilde{\omega} = 0$ and combining this with (\ref{vertpar}), it follows
that $\bar \nabla_X \widetilde{\omega} = 0$ for any vector $X$. This is verified first on the set $U$ where $\xi(p) \neq \widetilde{\xi}(p)$, but then everywhere
by $M$ by the density of the set $U$.
\end{proof}

{\it Proof of Theorem \ref{dim5}.} The case $ [\xi,\widetilde{\xi}] \not\equiv 0$ is covered by Proposition \ref{SKnotcommuting} (certainly, the 3-Sasakian
case does not occur in dimension 5). The case $ [\xi,\widetilde{\xi}] = 0$, is solved by Proposition \ref{morethandim5} simply noting that in dimension
$2n+1 = 5$ the condition  $\phi = \widetilde{\phi}$, or $\phi = - \widetilde{\phi}$ on $\mathcal{H}$ is automatically satisfied, as the distribution
$\mathcal{H}$ is 2-dimensional, and is $\phi$ and $\widetilde{\phi}$  invariant. $\Box$

\vspace{0.2cm}

For the proof of Theorem \ref{codim2} we need two more steps. The first step clarifies the organization of the critical set of the angle function.

\begin{proposition} Let $(M^{2n+1}, g, \phi, \eta , \xi)$ be a compact $K$-contact manifold. Let $\widetilde{\xi}$
be a unit Killing vector field with $[\xi ,\widetilde{\xi}]=0$ on $M$
and assume the angle function $f = g(\widetilde{\xi},\xi )$ is not a constant function. Then $f$ has exactly two critical manifolds, $\Sigma_{-1}$, $\Sigma_1$,
the $-1$ and $1$ level sets.
\end{proposition}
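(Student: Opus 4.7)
The plan is to derive an explicit gradient formula for $f$, classify the possible critical values, and then use compactness of $M$ to pin down both critical loci.

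First I would compute $\nabla f$ directly, using only that $\xi$ is Killing (a consequence of the $K$-contact hypothesis) and $\widetilde{\xi}$ is Killing with $[\xi,\widetilde{\xi}]=0$. The commuting relation together with $\nabla_X \xi = -\phi X$ yields $\nabla_\xi \widetilde{\xi} = \nabla_{\widetilde{\xi}}\xi = -\phi \widetilde{\xi}$, and a short computation (the second term handled via skew-symmetry of $\nabla \widetilde{\xi}$) then gives
$$
X(f) = g(\nabla_X \xi, \widetilde{\xi}) + g(\xi, \nabla_X \widetilde{\xi}) = 2\, g(X, \phi \widetilde{\xi}),
$$
so $\nabla f = 2 \phi \widetilde{\xi}$. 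This matches the formula of Proposition \ref{twoKcontcomm}, but is derived here without requiring $\widetilde{\xi}$ to be a Reeb field.

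Next, since $\mathrm{Ker}\,\phi = \mathrm{Span}(\xi)$, a point $p$ is critical for $f$ iff $\widetilde{\xi}(p)$ is proportional to $\xi(p)$; as both are unit vectors, this forces $\widetilde{\xi}(p) = \pm \xi(p)$, i.e.\ $f(p) = \pm 1$. Consequently the critical values of $f$ lie in $\{-1,1\}$ and the critical set equals $\Sigma_1 \cup \Sigma_{-1}$ with $\Sigma_{\pm 1} := f^{-1}(\pm 1)$.

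Finally I would invoke compactness: $f$ attains its maximum and minimum on $M$, both of which are critical values and therefore lie in $\{-1,1\}$; since $f$ is non-constant they are distinct, forcing $\max f = 1$ and $\min f = -1$, so both $\Sigma_{\pm 1}$ are non-empty. Moreover, $\Sigma_1$ and $\Sigma_{-1}$ are the zero sets of the Killing vector fields $\widetilde{\xi} - \xi$ and $\widetilde{\xi} + \xi$ respectively, so Kobayashi's classical theorem on the zero locus of a Killing field guarantees that each is a disjoint union of closed totally geodesic submanifolds. The only point requiring a little care is the gradient computation: one cannot simply quote Proposition \ref{twoKcontcomm}, which assumed $\widetilde{\xi}$ to be a Reeb field, so the calculation has to be redone using only the Killing property of $\widetilde{\xi}$ and commutativity with $\xi$.
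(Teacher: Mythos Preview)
Your gradient computation and the identification of the critical values $\{-1,1\}$ are correct and match the paper's approach. However, there is a genuine gap: the statement asserts that $f$ has \emph{exactly two} critical manifolds, and the paper's proof makes clear that this means each of $\Sigma_{-1}$ and $\Sigma_{1}$ is \emph{connected}. Your argument only shows that the critical set equals $\Sigma_{-1}\cup\Sigma_{1}$ with both pieces non-empty; invoking Kobayashi's theorem tells you each is a (possibly disconnected) union of closed totally geodesic submanifolds, which you yourself note. That is not enough to exclude, say, several components at level~$1$.

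The paper closes this gap by appealing to \cite{RUK}: the critical submanifolds are \emph{non-degenerate} and of \emph{even Morse--Bott index}. Standard Morse--Bott theory then forces exactly one component at the minimum level and exactly one at the maximum level (since $M$ is connected and all attaching cells are even-dimensional). To complete your proof you would need to supply this non-degeneracy/even-index argument, or some other reason forcing connectedness of the extremal level sets; Kobayashi's theorem alone does not do it.
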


\begin{proof} The function $f=g(\widetilde{\xi},\xi )$ is $\widetilde{\xi}$ invariant (it is also $\xi$ invariant),
its gradient vector field is $2\phi \widetilde{\xi}$ and, therefore, a point $p$ is critical for $g(\widetilde{\xi},\xi )$ if and only if $\phi \widetilde{\xi}=0$ at $p$,
that is $\widetilde{\xi}=\pm\xi $ at $p$. Each connected component of the critical set is a closed, totally geodesic, contact invariant sub-manifold of $M$ as was shown in \cite{RUK}. Since they are all non-degenerate critical sub-manifolds of even index (see \cite{RUK}),
there is exactly one component at the minimum level and exactly one component at the maximum level of $g(\widetilde{\xi},\xi )$.
\end{proof}
%
Next,
let $\Sigma_{-1}$, $\Sigma_0$, $\Sigma_1$ denote, respectively, the level $-1$, $0$ and $1$ manifolds of the angle function
$f =g(\xi ,\widetilde{\xi} )$. Through any point $p\in \Sigma_0$, there passes a unique geodesic $\gamma$,
intersecting orthogonally each of the three sub-manifolds. Moreover, the unit tangent vector of $\gamma$
is $N=\frac{\phi \widetilde{\xi}}{\|\phi \widetilde{\xi}\|}$ away from the critical set of $f$.

\begin{proposition} \label{laststep}  With the assumptions from Theorem \ref{codim2},
suppose that one of the critical manifolds $\Sigma_{-1}$ and $\Sigma_1$ has codimension 2. Then
either $\phi = \widetilde{\phi}$ on $\mathcal{H}$ or $\phi = - \widetilde{\phi}$ on $\mathcal{H}$.
\end{proposition}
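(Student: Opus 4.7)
The plan is to use the codimension-$2$ hypothesis to identify the limit of the horizontal distribution $\mathcal{H}$ at the critical submanifold and then to propagate the conclusion over the whole regular set by parallel transport along the gradient flow of $f$. The first step is a linear-algebraic analysis at a point $p_0 \in \Sigma_1 = f^{-1}(1)$; the case of $\Sigma_{-1}$ is entirely analogous. Since $\widetilde{\xi}=\xi$ on $\Sigma_1$, this submanifold is the zero set of the smooth section $Y:=\widetilde{\xi}-\xi$ of $TM$; using $\nabla_X\xi=-\phi X$ (Sasakian) and $\nabla_X\widetilde{\xi}=-\widetilde{\phi}X$ ($K$-contact), one has $\nabla Y|_{p_0}=(\phi-\widetilde{\phi})|_{p_0}$, a skew-symmetric endomorphism of $T_{p_0}M$. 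Because $\Sigma_1$ is a non-degenerate (Morse--Bott) critical submanifold of $f$ of codimension $2$ (cf.\ \cite{RUK}), a short Hessian computation using $\nabla f=2\phi\widetilde{\xi}$ and the Sasakian formula for $\nabla\phi$ identifies $T_{p_0}\Sigma_1=\mathrm{Ker}(\phi-\widetilde{\phi})|_{p_0}$ and $N_{p_0}\Sigma_1=\mathrm{Im}(\phi-\widetilde{\phi})|_{p_0}$, which are mutually orthogonal by skew-symmetry and satisfy $\dim N_{p_0}\Sigma_1=2$. Setting $\mathcal{H}_0:=T_{p_0}\Sigma_1\cap\mathcal{D}_{p_0}$ yields a $(2n-2)$-dimensional subspace of $\mathcal{D}_{p_0}$ on which $\phi=\widetilde{\phi}$.

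The second step is to compute the limit of $\mathcal{H}$ along a unit-speed normal geodesic $\gamma(t)$ from $p_0$ with $\gamma'(0)=v_0\in N_{p_0}\Sigma_1$; by Proposition \ref{lemmadistrib}(ii), $\gamma$ is (up to orientation) an integral curve of the normalized gradient $N=\phi\widetilde{\xi}/\|\phi\widetilde{\xi}\|$, so $N$ tends to $\pm v_0$ as $t\to 0^+$. Naively $\mathcal{H}^{\perp}={\rm Span}(\xi,\widetilde{\xi},\phi\widetilde{\xi})$ collapses at $p_0$ to the $2$-plane ${\rm Span}(\xi,v_0)$, so the third direction must be recovered by renormalizing $\widetilde{\xi}-\xi$: the first-order expansion $\widetilde{\xi}-\xi\sim t(\phi-\widetilde{\phi})v_0$ shows that $(\widetilde{\xi}-\xi)/\|\widetilde{\xi}-\xi\|$ converges to a unit vector $w_0\in N_{p_0}\Sigma_1$ perpendicular to $v_0$. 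Here the codimension-$2$ hypothesis is crucial, as it makes $N_{p_0}\Sigma_1$ exactly the $2$-plane spanned by $v_0$ and $w_0$; consequently $\mathcal{H}^{\perp}$ limits to ${\rm Span}(\xi)\oplus N_{p_0}\Sigma_1$ and $\mathcal{H}$ itself limits to $\mathcal{H}_0$.

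The third and final step is to propagate the conclusion by parallel transport. Since $(g,\eta)$ is Sasakian, $J$ is parallel on the cone $\bar M$; by relation (\ref{vertpar}), valid here because one of the structures is Sasakian, $\bar\nabla_V\widetilde{J}=0$ for every $V\in\mathcal{\bar V}$, in particular for $V=N$. Hence $J\widetilde{J}$ is $\bar\nabla_N$-parallel, and the $\mp 1$-eigenspace decomposition $\mathcal{H}=\mathcal{H}^+\oplus\mathcal{H}^-$ is preserved along the gradient flow of $f$. By continuity, the limit of $\mathcal{H}^-$ at $p_0$ is a subspace of $\mathcal{H}_0$ on which $J=-\widetilde{J}$, but on $\mathcal{H}_0$ we have $J=\widetilde{J}$, so this limit is $\{0\}$. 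Thus $\dim\mathcal{H}^-=0$ along every gradient flow line emanating from $\Sigma_1$; since the only critical values of $f$ are $\pm 1$ and $M$ is compact, every point of the connected open set $U=M\setminus(\Sigma_{-1}\cup\Sigma_1)$ lies on such a flow line, and the local constancy of $\dim\mathcal{H}^{\pm}$ on $U$ then gives $\mathcal{H}^-=0$ on all of $U$, i.e., $\phi=\widetilde{\phi}$ on $\mathcal{H}$. The symmetric situation where $\Sigma_{-1}$ has codimension $2$ is handled identically with $Y$ replaced by $\widetilde{\xi}+\xi$ and yields $\phi=-\widetilde{\phi}$ on $\mathcal{H}$. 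The most delicate point is the limit identification in the second step: the renormalization of $\widetilde{\xi}-\xi$ and the exhaustion of $N_{p_0}\Sigma_1$ by $v_0$ and $w_0$, which is exactly where codim~$2$ enters, are where the real work lies.
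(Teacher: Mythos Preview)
Your proof is correct and follows essentially the same strategy as the paper's: identify the limit of $\mathcal{H}$ at the codimension-$2$ critical submanifold (where $\phi=\widetilde{\phi}$ on $T\Sigma_1\cap\mathcal{D}$), then propagate back along the gradient flow using that both $\mathcal{H}$ and $J\widetilde{J}$ are $\bar\nabla_N$-parallel. The paper is terser---it recovers the third limiting direction of $\mathcal{V}$ as $\lim\phi N$ rather than your renormalized $(\widetilde{\xi}-\xi)/\|\widetilde{\xi}-\xi\|$ (these agree up to sign), and it simply asserts $\phi\widetilde{\phi}=-1$ on $T\Sigma_1$ where you supply the Hessian/linearization argument.
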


\begin{proof} Suppose $\Sigma_1$ has codimension 2 and let $p=\gamma (0)\in \Sigma_0$,
$q=\gamma (T)\in \Sigma_1$.
Then the parallel transport of $\mathcal{H}$ from $p$ to $q$
along $\gamma$ is orthogonal to $N$ and $\phi N$ and therefore is
fully contained in $T_q\Sigma_1$. But in $T\Sigma_1$, one has $\phi \widetilde{\phi} =-1$ proving that
all eigenvalues of $\phi \widetilde{\phi}$ are all equal to $-1$.
If one assumes $\Sigma_{-1}$ to have codimension 2, the result will be that all eigenvalues of $\phi \widetilde{\phi}$ are equal to $1$.
\end{proof}

\vspace{0.2cm}

To conclude, here is the proof of Theorem \ref{codim2}. Combining Propositions \ref{laststep} and \ref{morethandim5}, it follows that the structure
$(g, \widetilde{\xi}, \widetilde{\eta})$ is also Sasakian. Now the conclusion follows from Theorem \ref{tyt}, noting that, by assumption,
the angle function is not constant.

\vspace{0.2cm} \noindent {\bf Acknowledgments:} The authors are
grateful to David Blair and the referee for useful
comments about this note.

\label{lastpage-01}
\end{document}